\theoremstyle{plain}
\newtheorem{theorem}{Theorem}[section]
\newtheorem{corollary}[theorem]{Corollary}
\newtheorem{proposition}[theorem]{Proposition}
\theoremstyle{definition}
\newtheorem{definition}[theorem]{Definition}
\newtheorem{example}[theorem]{Example}
\newtheorem{examples}[theorem]{Examples}
\newtheorem{remark}[theorem]{Remark}
\newtheorem{note}[theorem]{Note}
\title[Directional Invariants of Doubly Periodic Tangles]
  {Directional Invariants of Doubly Periodic Tangles}
\author{Ioannis Diamantis}
\address{Department of Data Analytics and Digitalisation,
Maastricht University, School of Business and Economics,
P.O.Box 616, 6200 MD, Maastricht,
The Netherlands.}
\email{i.diamantis@maastrichtuniversity.nl}
\author{Sofia Lambropoulou}
\address{School of Applied Mathematical and Physical Sciences, National Technical University of Athens, Zografou campus, GR-15780 Athens, Greece.}
\email{sofia@math.ntua.gr}
\urladdr{http://www.math.ntua.gr/~sofia}
\author{Sonia Mahmoudi}
\address{Advanced Institute for Materials Research, Tohoku University, 2-1-1 Katahira, Aoba-ku, Sendai 980-8577, Japan;  RIKEN iTHEMS, Wako, Saitama 351-0198, Japan}
\email{sonia.mahmoudi@tohoku.ac.jp}
\subjclass[2020]{57K10, 57K12, 57K35, 57K99, 57M10, 57M50} 
\keywords{doubly periodic structures, translational symmetry, tangles, thickened torus, motif, topological invariant, interlinked compound, directional element, directional type, axis-motif.}
\begin{document}

\setcounter{section}{-1}

\begin{abstract} 
In this paper we define novel topological invariants of doubly periodic tangles (DP tangles). DP tangles are embeddings of curves in the thickened plane with translational symmetries in two independent directions. We first organize the components of a DP tangle into different {\it interlinked compounds}, which are invariants of a DP tangle. The notion of interlinked compound leads to the classification of DP tangles according to their {\it directional type}. We then prove that the directional type is an invariant of DP tangles using the concept of {\it axis-motif}, which can be viewed as the blueprint of a DP tangle.
\end{abstract}

\maketitle


\section{Introduction}\label{sec:0}

A doubly periodic tangle (DP tangle) is an embedding of curves in the thickened plane $\mathbb{E}^2 \times I$ which is symmetric under translations in two transverse directions. So, DP tangles can be viewed as lifts of links in the thickened torus, $T^2 \times I$. An example can be viewed in Figure~\ref{DPtangle}. DP tangles serve as a significant framework for analyzing and understanding the topological properties of interwoven filament systems across micro-, meso- and macro-scales, including, but not limited to, polymer melts  \cite{Eleni1, Eleni2, Eleni3}, fabric-like structures \cite{Sabetta, Sonia1}, molecular chemistry \cite{Yaghi, Treacy}, and cosmic filaments \cite{Bond,Hong1,Hong2}. 

\smallbreak

The topological classification of DP tangles is at least as hard a problem as the full classification of knots and links in the three-space. Such problems are approached by constructing topological invariants. We do that by first representing a DP tangle by its quotient, that we call {\it motif}, under a {\it periodic lattice}, which serves as a frame for the double translational symmetry of the DP tangle.  This approach reduces the complexity of the classification problem. In this direction, several numerical, polynomial and finite type invariants have been constructed for doubly periodic textile structures,such as woven and knitted fabrics (works initiated by Grishanov et al., cf. \cite{Grishanov1, Grishanov.part1, Grishanov.part2, Morton, Grishanov.Vassiliev1, Grishanov.Vassiliev2, Kurlin, Eleni4}), which form a particular subclass of DP tangles. The equivalence relation that these invariants respect is based on assumptions of minimal motifs, that is motifs that are minimal for reproducing the DP tangles under 2-periodic boundary conditions. Note however that obtaining a minimal motif of a DP tangle is known to be a non-trivial problem \cite{Grishanov1}. 

\smallbreak

Motivated by the classification problem of DP tangles and the above works, we first established in \cite{DLM} the topological equivalence of DP tangles on the level of arbitrary motifs of theirs, without the assumption of minimality (see Theorem~\ref{th:equivalence}). 

In this paper we construct some new topological invariants of DP tangles. More precisely, for a DP tangle $\tau_{\infty}$ we start by considering a {\it flat motif} $\tau$, that is, the diagram of a motif on the flat torus. We first define the notion of an {\it interlinked compound} of $\tau$, which is a subtangle of $\tau$ that can be split from other components. Using this concept, we organize interlinked compounds into classes (null-homotopic, ribbon and cover) and subclasses, that are invariant under the equivalence relation of DP tangles (see Section~\ref{sec:compounds}). We further define the {\it class of a motif} $\tau$, by ordering the classes of its interlinked compounds, and prove that it is a topological invariant of the DP tangle $\tau_{\infty}$ (see Theorem~\ref{th:interlinked-compounds}). 

We next move to defining directional invariants of DP tangles,  generalizing the notion of `axial type' of a textile strand in \cite{Morton}. Thanks to the notion of interlinked compound, we can extend the axial type in the general setting of DP tangles. Indeed, we introduce the notion of {\it directional element} of a motif $\tau$, defined as being either a component (an isolated knot or an essential component) or an interlinked compound (a chain-link or a full-polycatenane compound), see Definition~\ref{def:element}. Then, in Theorem~\ref{th:distinct-directions} we prove that the number of directions in a motif is a topological invariant of the DP tangle. 

The above lead, in turn, to the notion of {\it axis-motif}, which can be viewed as the blueprint of a DP tangle (Definition~\ref{def:axis-motif}). In particular, the axis-motif $\alpha(\tau)$ of a motif $\tau$  captures the number of elements in $\tau$ of a given direction, and consequently the {\it directional type} of the corresponding DP tangle, which is related to the class of the motif, and which constitutes another invariant of DP tangles (see Theorem~\ref{th:directional types}). 

All the above invariants of DP tangles are measures that naturally inform on their topological complexity, they refer to global topological properties of a DP tangle, and they add to the list of the existing invariants. In the end of the paper we compute our invariants on several DP tangles, comparing them at the same time with some known numerical invariants. 

\smallbreak

The paper is organized as follows: 
In \S~\ref{sec:setup} we present the setting and the results from \cite{DLM}. More precisely, we first recall the definitions of DP tangles and motifs and we state the generalized Reidemeister theorem. Then, in \S~\ref{sec:compounds} we introduce the notion of interlinked compounds, which allows us to assign a class to each DP tangle. This leads to our first invariants of DP tangles. In \S~\ref{sec:type}, and using the concept of interlinked compounds, we define the notion of direction of an element of a DP tangle. This notion leads to a classification of DP tangles based on their directional type, that we also introduce. Finally, in \S~\ref{sec:tables}, we provide examples highlighting the importance of the invariants presented in this paper (see Tables~\ref{table1} and \ref{table2}).

\section{Preliminaries on equivalence of DP tangles}\label{sec:setup}

In this section we recall the notions of DP tangles, their generating motifs and their equivalence, as established in \cite{DLM}, where we formulate and prove DP tangle equivalence for arbitrary DP tangles and arbitrary motifs of theirs. This study is based on works initiated by Grishanov et al., cf. \cite{Grishanov1, Grishanov.part1, Grishanov.part2, Morton}. It should be pointed out that these prior works focus on a subclass of DP tangles related to textiles (such as woven and knitted fabrics), and the results therein are based on assumptions of minimal motifs. 
In DP textiles, their motifs consist of only essential simple closed curves embedded in the thickened torus. One of the reasons is that the weaving and knitting machines that make these textiles cannot build null-homotopic components. In this study we consider the full theoretical generality.

\subsection{DP tangles and motifs}

Due to the double translational symmetry of {\it DP tangles}, a DP tangle is defined as the lift to $\mathbb{E}^2 \times I$ of a link $\tau$ in the thickened torus $T^2 \times I$, where $\mathbb{E}^2$ stands for the Euclidean plane and $I=[0,1]$ the unit interval. View Figure~\ref{DPtangle}  for an illustrated example. More precisely, consider a basis $B=\{u,v\}$ of $\mathbb{E}^2$ and let $\rho: \mathbb{E}^2 \rightarrow{} T^2$ be the covering map that assigns a meridian $m$ of $T^2$ to $v$ and a longitude $l$ of $T^2$ to $u$. This covering map $\rho$ extends to a covering map $\tilde{\rho}$: $ \mathbb{E}^2 \times I \rightarrow{} T^2 \times I$. 

\begin{figure}[ht]
\centerline{\includegraphics[width=5in]{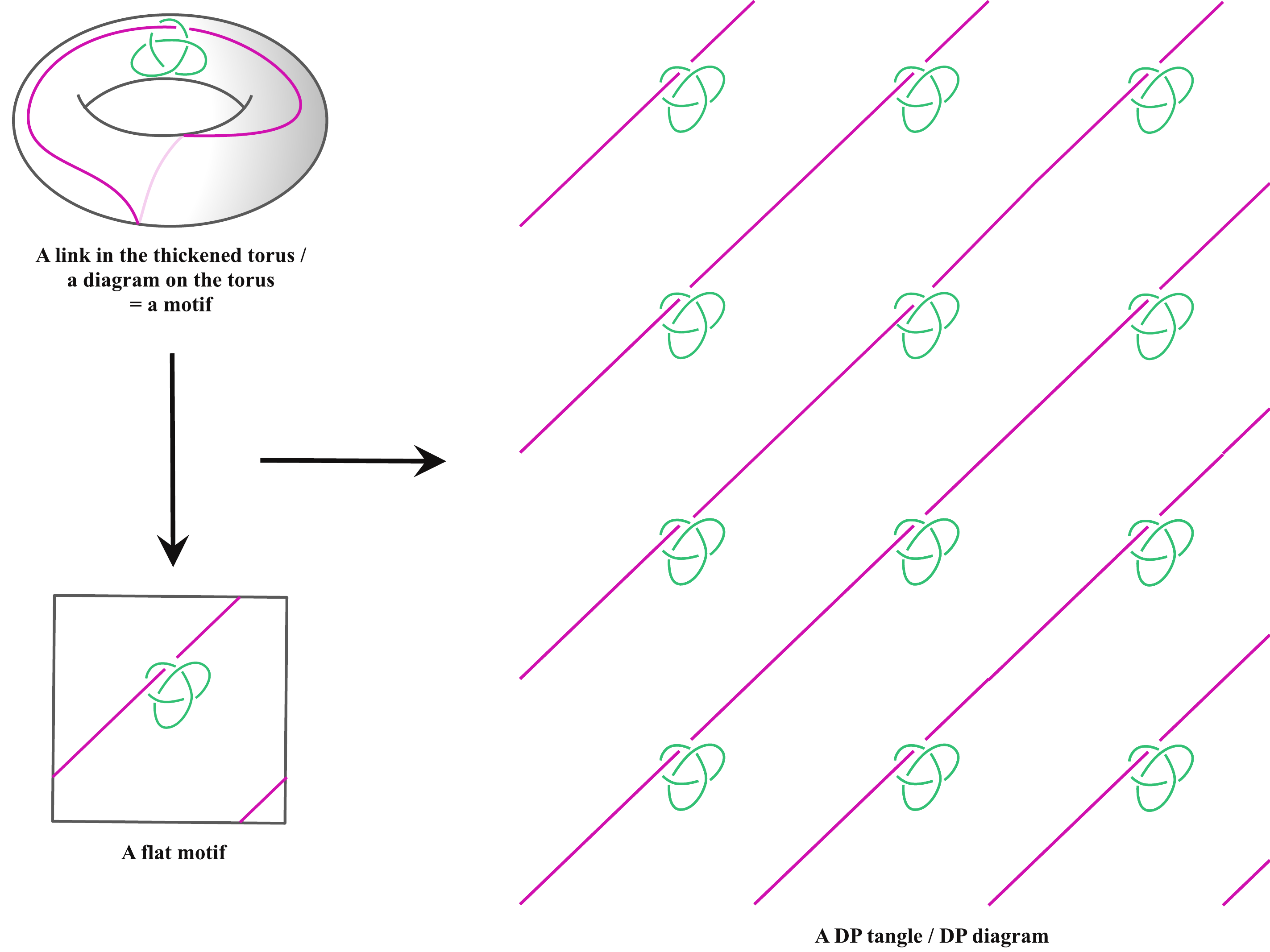}}
\vspace*{8pt} 
\caption{\label{DPtangle} 
A  DP tangle as the lift of a link in $T^2 \times I$, and a corresponding flat motif. }
\end{figure}

\begin{definition}\label{def:DP tangle}
A {\it DP tangle}, denoted $\tau_{\infty}$, is the lift under $\tilde{\rho}$ of a link $\tau$ in $T^2 \times I$. Moreover, a {\it DP diagram}, denoted  $d_{\infty}$, is the lift under $\rho$ of a diagram $d$ of $\tau$ in $T^2 \times \{0\}$. We call $d$ (resp. $\tau$) a {\it motif} of $d_{\infty}$ (resp. $\tau_{\infty}$).
\end{definition}

Assigning further a basis $B=\{u,v\}$ of $\mathbb{E}^2$ to a DP tangle together with a choice of {\it longitude} $l$ and {\it meridian} $m$ for $T^2$, the set of points $\Lambda (u,v) = \{xu + yv\, |\, x,y \in \mathbb{Z}\}$ generated by $B$ defines a {\it periodic lattice} isomorphic to $\mathbb{Z}^2$. Moreover, the tangle diagram contained in the flat torus $T^2$ that arises as the identification space by identifying the opposite sides of the boundary of any one of the parallelograms of $\Lambda$, represents a motif $d$ in $T^2$ and is called a {\it flat motif}, also denoted $d$ and we shall write $d = d_{\infty} / \Lambda$. In particular, it is well-known that the same periodic lattice $\Lambda = \Lambda (u,v) = \Lambda' (u',v')$ can be generated by two different bases $B=\{u,v\}$ and $B'=\{u',v'\}$ if and only if for $x_1, x_2, x_3, x_4 \in \mathbb{Z}$,
$$\begin{pmatrix}
u' \\
v' 
\end{pmatrix}
=
\begin{pmatrix}
x_1 & x_2 \\
x_3 & x_4 
\end{pmatrix}
\cdot
\begin{pmatrix}
u \\
v 
\end{pmatrix}
{\it  , where }
\mid{x_1 x_4 - x_2 x_3}\mid = \pm 1.$$

\begin{figure}[H]
\centerline{\includegraphics[width=5in]{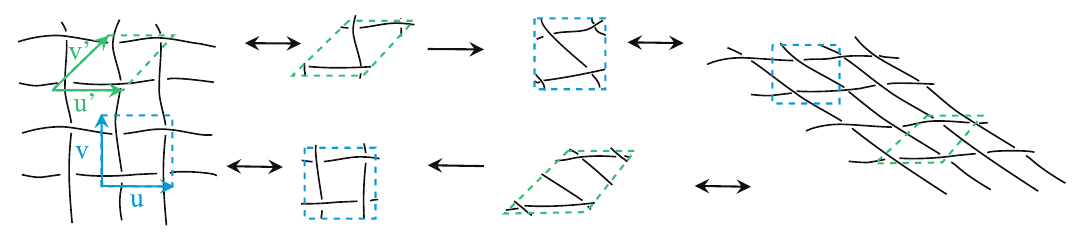}}
\vspace*{8pt}
\caption{\label{shearing} 
A fixed lattice for a DP tangle, with two different bases of $\mathbb{E}^2$ (green and blue), which give rise to a shearing of the DP tangle.}
\end{figure}

It is important to highlight that given a flat motif $d$ of a DP tangle $\tau_{\infty}$ associated to a lattice $\Lambda$, then any finite cover of $d$ also defines a flat motif of $\tau_{\infty}$ (see  Figure~\ref{Tknot-Tlink} for an example). We thus recall the definition of minimal lattice associated to a DP tangle.

\begin{figure}[H]
\centerline{\includegraphics[width=5.1in]{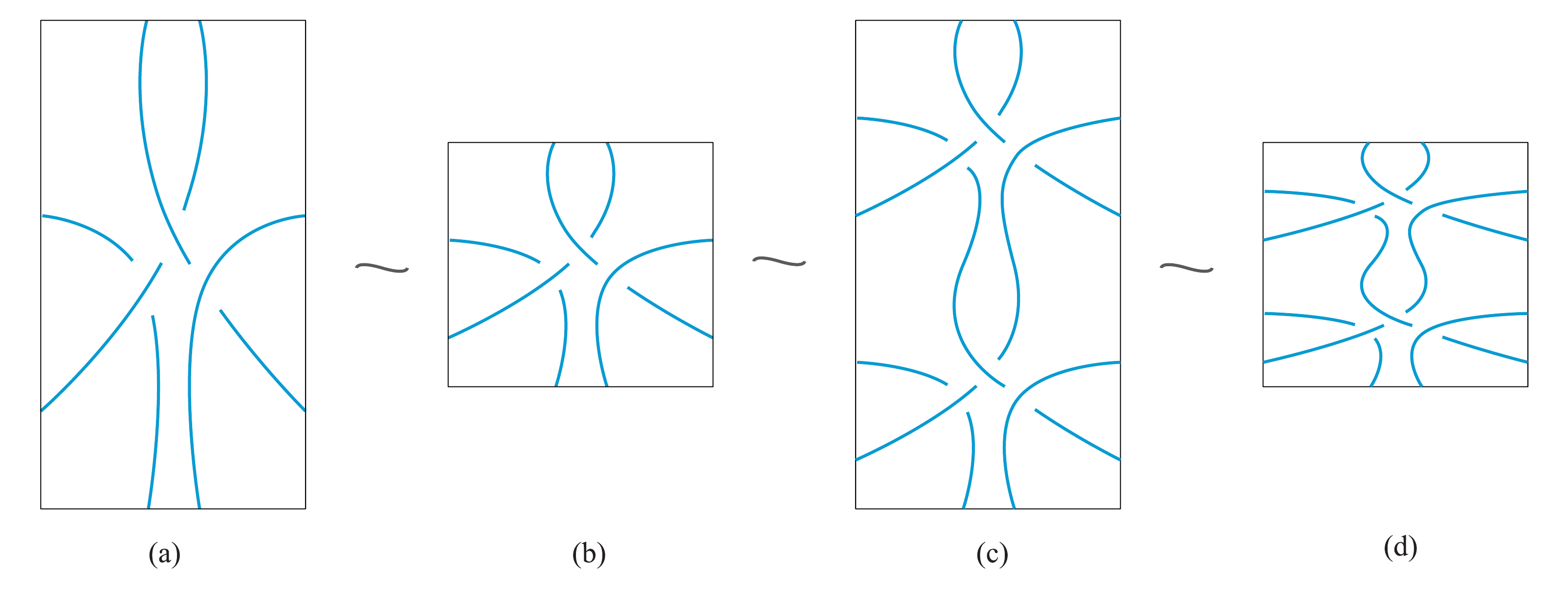}}
\vspace*{8pt}
\caption{\label{Tknot-Tlink} Motif (a) is a stretching of (b); motifs (b) and (c) are scale equivalent; motif (d) is a contraction of (c).}
\end{figure}

\begin{definition}\label{def:minimal lattice}
A {\it minimal lattice}, denoted by $\Lambda_{min}$, of a DP tangle $\tau_{\infty}$, is defined as a lattice satisfying $\Lambda \subseteq \Lambda_{min}$, in the set of all periodic lattices $\Lambda$ of $d_{\infty}$, up to any area-preserving transformation of $\mathbb{E}^2$. Moreover, the (flat) motif $d_{min} = d_{\infty} / \Lambda_{min}$ is called a {\it minimal (flat) motif of} $d_{\infty}$. 
\end{definition}

\subsection{Equivalence of DP tangles and their motifs}

We first recall the definition of equivalence for DP tangles that preserves the double periodicity.

\begin{definition}\label{def:DPequivalence}
Two DP tangles (resp. DP diagrams) are {\it equivalent} if they differ by doubly periodic (diagrammatic) isotopies and orientation preserving invertible affine transformations of the plane $\mathbb{E}^2$ that carry along the DP tangles (resp. DP diagrams). 
\end{definition}

In \cite{DLM}, we studied the equivalence of DP tangles as reflected in their flat  motifs. More precisely, doubly periodic isotopies include local motif isotopies (that is, surface isotopies and Reidemeister moves), as well as global isotopies, induced by invertible affine transformations of the plane. These  consist of both non-area preserving transformations such as re-scalings (stretches, contractions), as well as area preserving transformations like rigid translations and rotations of the plane, or shear deformations. Re-scalings correspond to torus isotopies, like inflation or contractions, while shear deformations correspond to {\it Dehn twists} of the underlying torus, which are orientation preserving self-homeomorphisms of the torus, as related in \cite{Grishanov1,Grishanov.part1} (see Figure~\ref{Rtwists} for an example). More precisely,  consider first the lattice $\Lambda$, which is generated by  two bases $B$ and $B'$ as defined above. We now assign the longitude $l$ of $T^2$ to $u'$ and the meridian $m$ to $v'$ of $B'$. The corresponding covering map, say $\rho': \mathbb{E}^2 \rightarrow{} T^2$, generates a new motif $d'$, which differs from the motif $d$ associated to the basis $B$ by a finite sequence of Dehn twists. 
Two (flat) motifs related by a finite sequence of Dehn twists are said to be {\it Dehn equivalent}. Finally, in the DP tangle isotopy one should also include static changes, such as shifts or re-scalings of the underlying lattice. Re-scalings of the underlying lattice correspond to distinct finite covers of the same motif and we shall be referring to them as {\it scale equivalence}. More precisely, let $\Lambda_0$, $\Lambda_1$ and $\Lambda_2$ be three (not necessarily distinct) periodic lattices associated to a DP diagram $d_{\infty}$ such that $\Lambda_1 \subseteq \Lambda_0$ and $\Lambda_2 \subseteq \Lambda_0$. Let also $d_0 = d_{\infty} / \Lambda_0$, $d_1 = d_{\infty} / \Lambda_1$ and $d_2 = d_{\infty} / \Lambda_2$ be three (flat) motifs of $d_{\infty}$. Then, $d_1$ and $d_2$ arise as finite covers of $d_0$ and are said to be {\it scale equivalent}.

\begin{figure}[H]
\centerline{\includegraphics[width=4.5in]{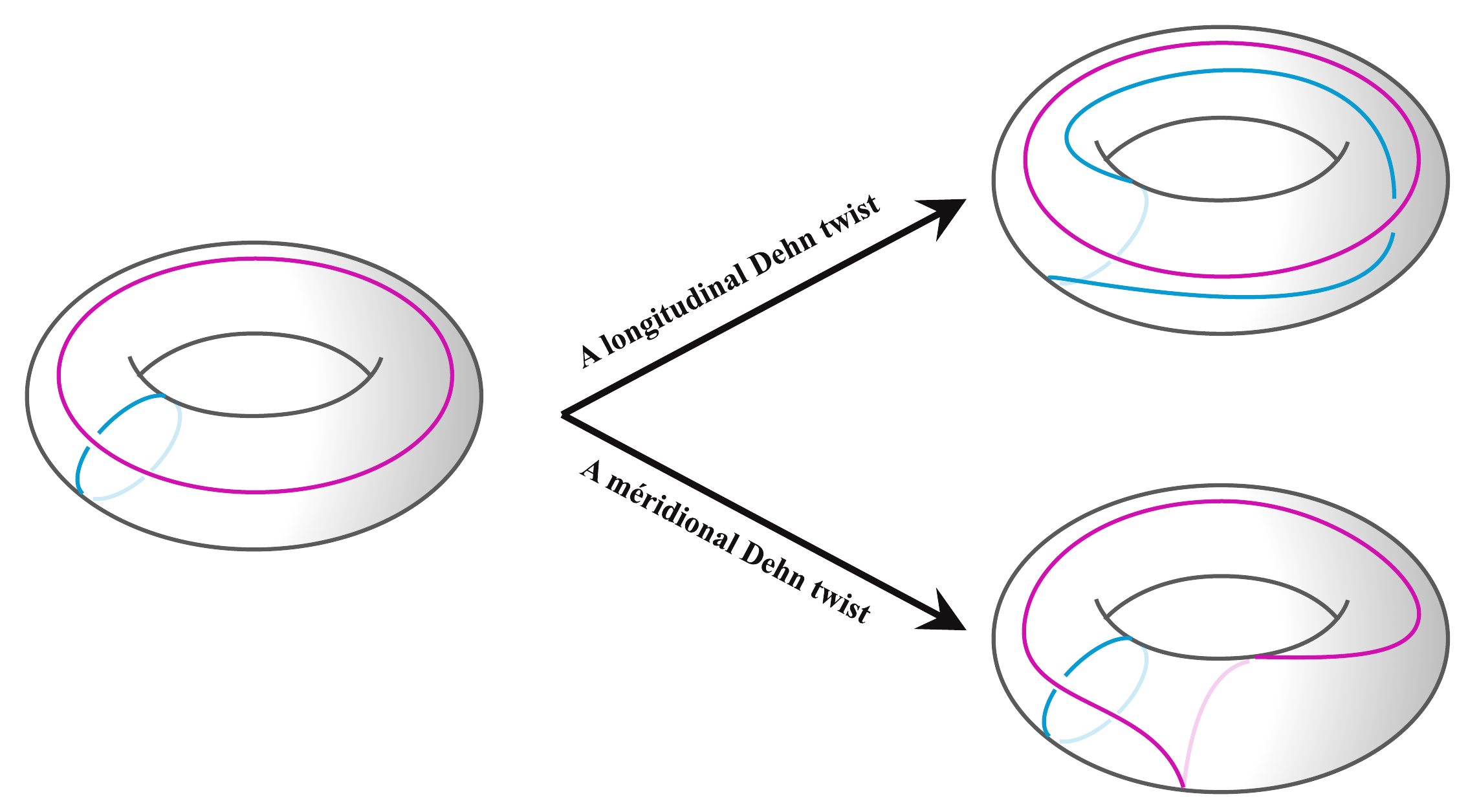}}
\vspace*{8pt}
\caption{\label{Rtwists} 
Two Dehn twist of the torus: a longitudinal (purple) and a meridional one (blue)}.
\end{figure}

The above lead to the following theorem for the DP tangle equivalence, which is very important for defining topological invariants of DP tangles.

\begin{theorem}[\cite{DLM}]\label{th:equivalence}
Let $\tau_{1,\infty}$ and $\tau_{2,\infty}$ be two DP tangles in $\mathbb{E}^2 \times I$, with corresponding DP diagrams $d_{1,\infty}$ and $d_{2,\infty}$. Let also $\Lambda_1$ and $\Lambda_2$ be the supporting point lattices such that $d_i = d_{i,\infty} / \Lambda_i$ is a flat motif of $d_{i,\infty}$ for $i \in \{1,2\}$. 
Then $\tau_{1,\infty}$ and $\tau_{2,\infty}$ are {\it equivalent} if and only if $d_1$ and $d_2$ are related by a finite sequence of shifts, motif isotopy moves, Dehn twists, orientation preserving affine transformations and scale equivalence.
\end{theorem}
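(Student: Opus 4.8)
The plan is to prove the biconditional by exploiting the covering correspondence $\tilde{\rho}\colon \mathbb{E}^2 \times I \to T^2 \times I$, which places DP tangles upstairs in bijection with links in the thickened torus downstairs, together with the lattice data that records the covering. I would prove both directions by decomposing each allowed operation into elementary pieces and matching these across the covering map. The guiding principle is that a flat motif $d_i = d_{i,\infty}/\Lambda_i$ is exactly the torus diagram of the link $\tau_i$ in $T^2 \times I$ whose lift is $\tau_{i,\infty}$, so the double-periodicity of the DP tangle is encoded downstairs by the torus together with the choice of longitude and meridian.

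For the backward direction, I would lift each of the five listed operations on flat motifs to an operation on $\tau_{\infty}$ and check that it is an admissible DP-tangle equivalence. Motif isotopy moves (surface isotopies and Reidemeister moves on $T^2$) lift under $\tilde{\rho}$ to isotopies of $\mathbb{E}^2 \times I$ that are equivariant with respect to the lattice translations, hence are doubly periodic isotopies by definition. Shifts, orientation-preserving affine transformations, and Dehn twists all lift to affine self-maps of $\mathbb{E}^2$ --- a rigid translation, a general orientation-preserving affine map, and a shear, respectively --- each of which carries the DP tangle along while preserving the double periodicity. Scale equivalence is the simplest case: if $\Lambda_1 \subseteq \Lambda_0$, then $d_1$ and $d_0$ admit the \emph{same} lift $d_{\infty}$, so the DP tangles they generate literally coincide.

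For the forward direction, I would start from Definition~\ref{def:DPequivalence} and resolve the given finite sequence of doubly periodic isotopies and orientation-preserving affine transformations into elementary steps, then project each step to the quotient torus. The compactly supported local isotopies descend, by equivariance, to surface isotopies and Reidemeister moves inside a fundamental domain, that is, to motif isotopy moves; here I would invoke a Reidemeister-type theorem for links in $T^2 \times I$, asserting that two torus diagrams present isotopic links precisely when they are connected by Reidemeister moves and ambient surface isotopy. For the affine part, I would use the decomposition of the affine group of $\mathbb{E}^2$: a rigid translation becomes a shift of the lattice, a shear becomes a Dehn twist via the $GL_2(\mathbb{Z})$ change of basis relating the longitude/meridian assignments described just before the theorem, a re-scaling becomes a scale equivalence, and the remaining orientation-preserving linear part descends to an orientation-preserving affine transformation of the motif.

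The main obstacle, I expect, lies in the careful bookkeeping of the lattice/basis ambiguity and its interaction with the affine transformations. Because the same lattice admits infinitely many bases related by $GL_2(\mathbb{Z})$, a single affine motion upstairs may be realized downstairs in several ways --- as a combination of a shift, a Dehn twist, and a reassignment of the chosen longitude and meridian --- and one must verify that every such realization is captured by the five listed operations, with no omissions and no spurious moves introduced. Establishing (or precisely citing) the torus Reidemeister theorem, and confirming that equivariant isotopies descend to and lift from honest motif moves without destroying periodicity, is the technical heart of the argument.
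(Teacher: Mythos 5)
First, a point of reference: the present paper does not prove this theorem at all --- it is quoted from \cite{DLM} (the authors' earlier paper on equivalences of DP tangles), so there is no in-paper proof to compare your argument against. Your overall strategy --- use the covering $\tilde{\rho}$ to put DP tangles in correspondence with links in $T^2 \times I$, then lift each of the five generating moves upstairs for one direction and project a decomposed equivalence downstairs for the other --- is the natural one and is consistent with the framework the paper sets up in Section 1. Your backward direction is essentially sound, including the observation that scale-equivalent motifs have literally the same lift.

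However, there is a genuine gap in your forward direction, and it concerns precisely the move whose necessity is least obvious: scale equivalence. You derive it from ``a re-scaling becomes a scale equivalence,'' but this conflates two different notions that the paper explicitly distinguishes: re-scalings of the plane $\mathbb{E}^2$ (non-area-preserving affine transformations, which correspond to inflations/contractions of the torus, i.e.\ motif isotopies) and re-scalings of the underlying \emph{lattice} (passage to a sublattice, which is what scale equivalence means). Scale equivalence is forced not by any affine motion upstairs but by the fact that the hypotheses hand you two a priori unrelated lattices $\Lambda_1$ and $\Lambda_2$: even when $\tau_{1,\infty}$ and $\tau_{2,\infty}$ are \emph{identical} and the equivalence is the identity map, the quotients $d_1 = d_{1,\infty}/\Lambda_1$ and $d_2 = d_{2,\infty}/\Lambda_2$ can be genuinely different diagrams (one may be a multi-fold cover of the other, or the lattices may be incomparable). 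The missing step is a common-lattice argument: after applying the isotopy and affine parts, the image of $\Lambda_1$ and the lattice $\Lambda_2$ are both period lattices of the same DP diagram; one must show (using discreteness of the full group of translational periods) that both sit inside a common period lattice $\Lambda_0$, so that both motifs arise as finite covers of the common quotient by $\Lambda_0$ and are therefore scale equivalent, possibly after Dehn twists to reconcile the choices of basis. Without this, processing the equivalence move-by-move never changes which lattice you quotient by, and the forward direction cannot close. Your final ``bookkeeping'' paragraph identifies only the $GL_2(\mathbb{Z})$ ambiguity of bases \emph{within one} lattice (correctly handled by Dehn twists), which is a different, and easier, issue than the ambiguity of the lattice itself.
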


\begin{figure}[H]
\centerline{\includegraphics[width=5in]{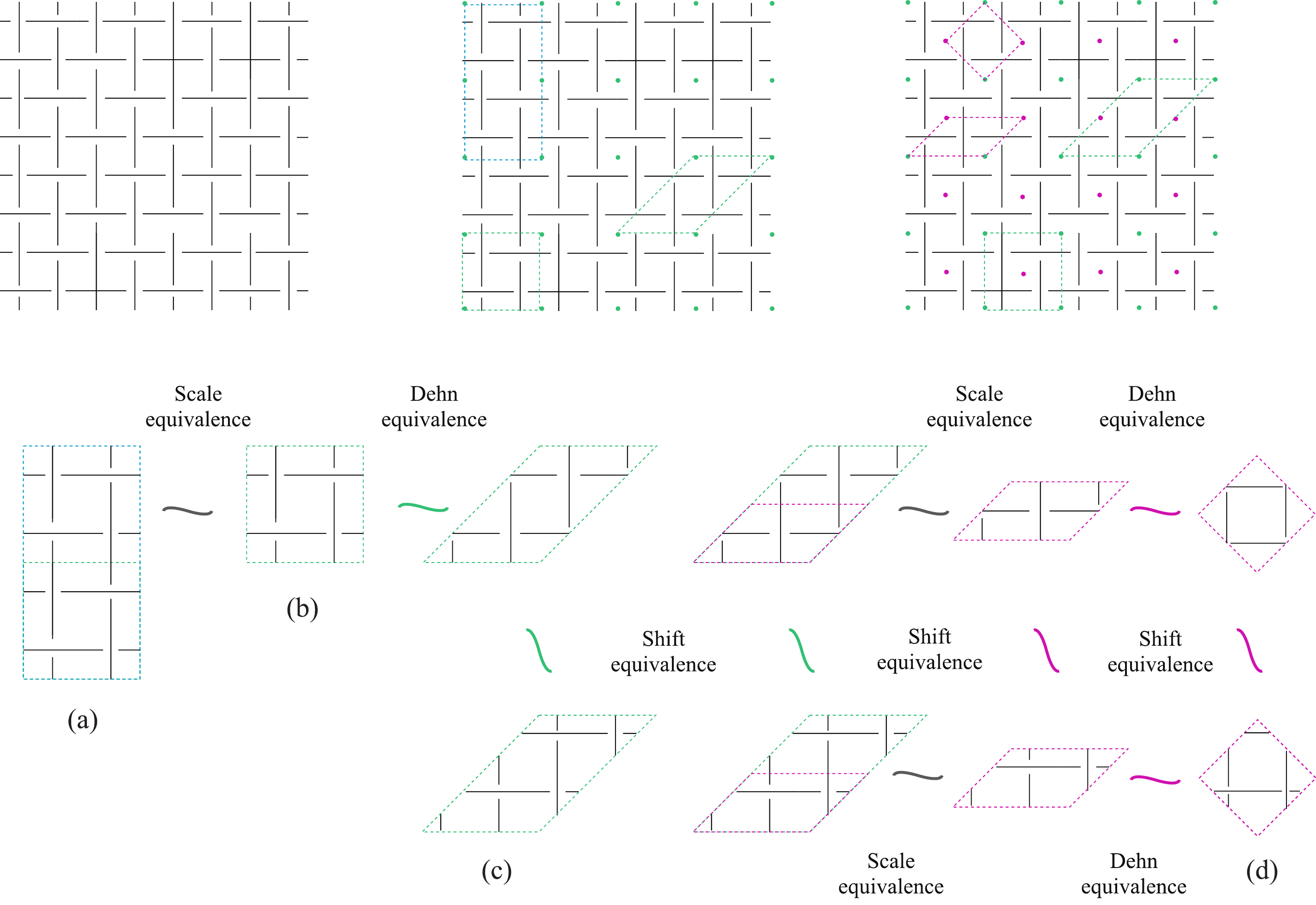}}
\vspace*{8pt}
\caption{\label{crossing} Equivalence transformations among different generating motifs for the same DP tangle, culminating to the minimal motif (d). Same colours of frames indicate the same lattices, related by area-preserving transformations. }
\end{figure}

\section{Interlinked compounds and the class of a motif} \label{sec:compounds}

In this section we introduce the notion of {\it interlinked compound} of a motif $\tau_{\infty}$ and we organize the interlinked compounds of a motif into invariant subclasses. Using this notion, we then distinguish motifs according to their interlinked compounds and we prove that the class of a motif $\tau_{\infty}$ is a topological invariant of $\tau_{\infty}$ (see Theorem~\ref{th:interlinked-compounds}).

\subsection{The class of an interlinked compound}

We recall that for a link embedded in the torus surface $T^2$ each component is a closed curve, which is either {\it null-homotopic}, namely contractible to a point in $T^2$, or {\it essential}, namely non-contractible in $T^2$. The null-homotopic components can be isotoped to circles, while the essential components comprise torus knots and torus links. According to our convention, an $(a,b)${\it -torus knot} is an embedding of $S^1$ in $T^2$ that winds $a$ times along the longitude  and $b$ times along the meridian of the torus, where $a$ and $b$ are coprime integers. On the level of the corresponding flat motif, the $(a,b)$-torus knot intersects $a$ times  the meridian and $b$ times  the longitude of the torus. 
The lift of an $(a,b)$-torus knot to the universal cover $\mathbb{E}^2$ is an infinite set of parallel lines of direction $(a,b)$. An $(a',b')${\it -torus link} is a link in $T^2$ consisting of $g$ components, each being an $(a,b)$-torus knot, where $g=gcd(a',b')$, $a'= g \cdot a$ and $b'=g \cdot b$.

\smallbreak

Let now $\tau_\infty$ be a DP tangle arising as the lift of a motif $\tau$ embedded in the thickened torus $T^2 \times I$ to $\mathbb{E}^2 \times I$. Unlike links in $T^2$, $\tau$ may contain crossings, therefore $\tau$ will consist in finitely many (closed) components, possibly knotted and possibly linked. 

\smallbreak

In the case of links forming a motif $\tau$, we are particularly interested in (sub)sets of interlinked components. Namely, we define: 

\begin{definition} \label{def:compound}
    Let $\tau$ be a motif in $T^2 \times I$ and $\mu$ be a set of closed components of $\tau$ such that,
    \begin{enumerate}
   \item[i.] $\mu$ forms a split sublink of $\tau$, that is, a sublink that can split from the rest of $\tau$ by isotopy; 
    \smallbreak  
    \item[ii.] $\mu$ cannot split into separate sublinks under isotopy.
\end{enumerate}
    Then, $\mu$ is said to be an {\it interlinked compound} of $\tau$. 
    The definition carries through, analogously, to the flat motif of $\tau$ and to the DP tangle $\tau_\infty$. 
\end{definition}

\noindent Note that Definition~\ref{def:compound} allows for an interlinked compound to be just a single component of the motif. 
In Figure~\ref{interlinked-compounds}, the motifs (a) and (e) consist of two interlinked components, the motifs (b) and (h) consist of three interlinked components, while the motifs (c), (d), (f) and (g) consist of  only one interlinked compound. Similarly, in Figure~\ref{DPtangle} we have one interlinked compound in the motif and the flat motif, which lifts to an infinitum of identical interlinked compounds in $\tau_\infty$.

\begin{figure}[ht]
\begin{center}
\includegraphics[width=5in]{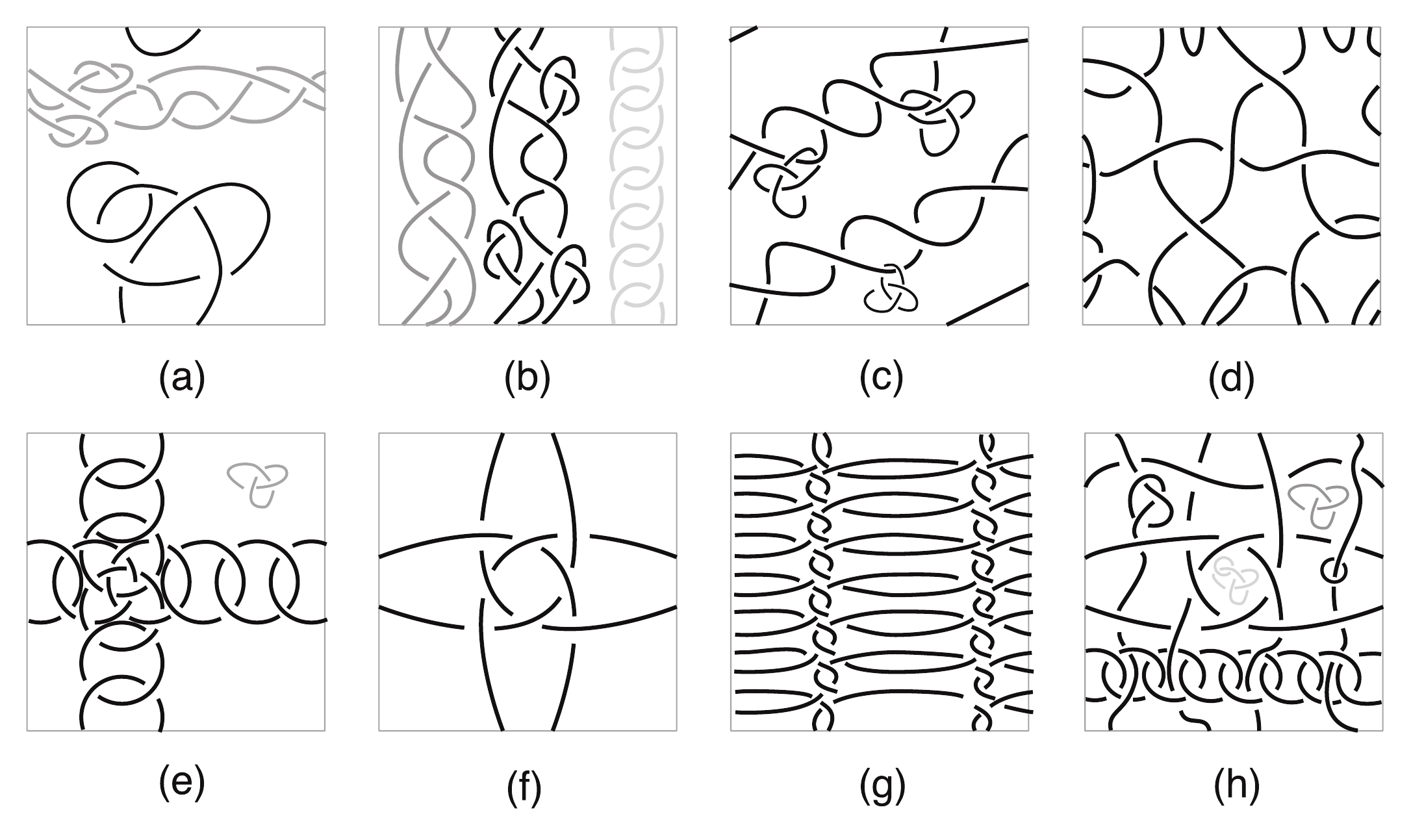}
\end{center}
\caption{\label{interlinked-compounds} Examples of flat motifs.  Different shades help identify the different interlinked compounds in the motifs.}
\end{figure}

A direct consequence of Definition~\ref{def:compound} and the types of embedded closed curves in $T^2$ is the following distinction of interlinked compounds into different classes.

\begin{figure}[ht]
\begin{center}
\includegraphics[width=5in]{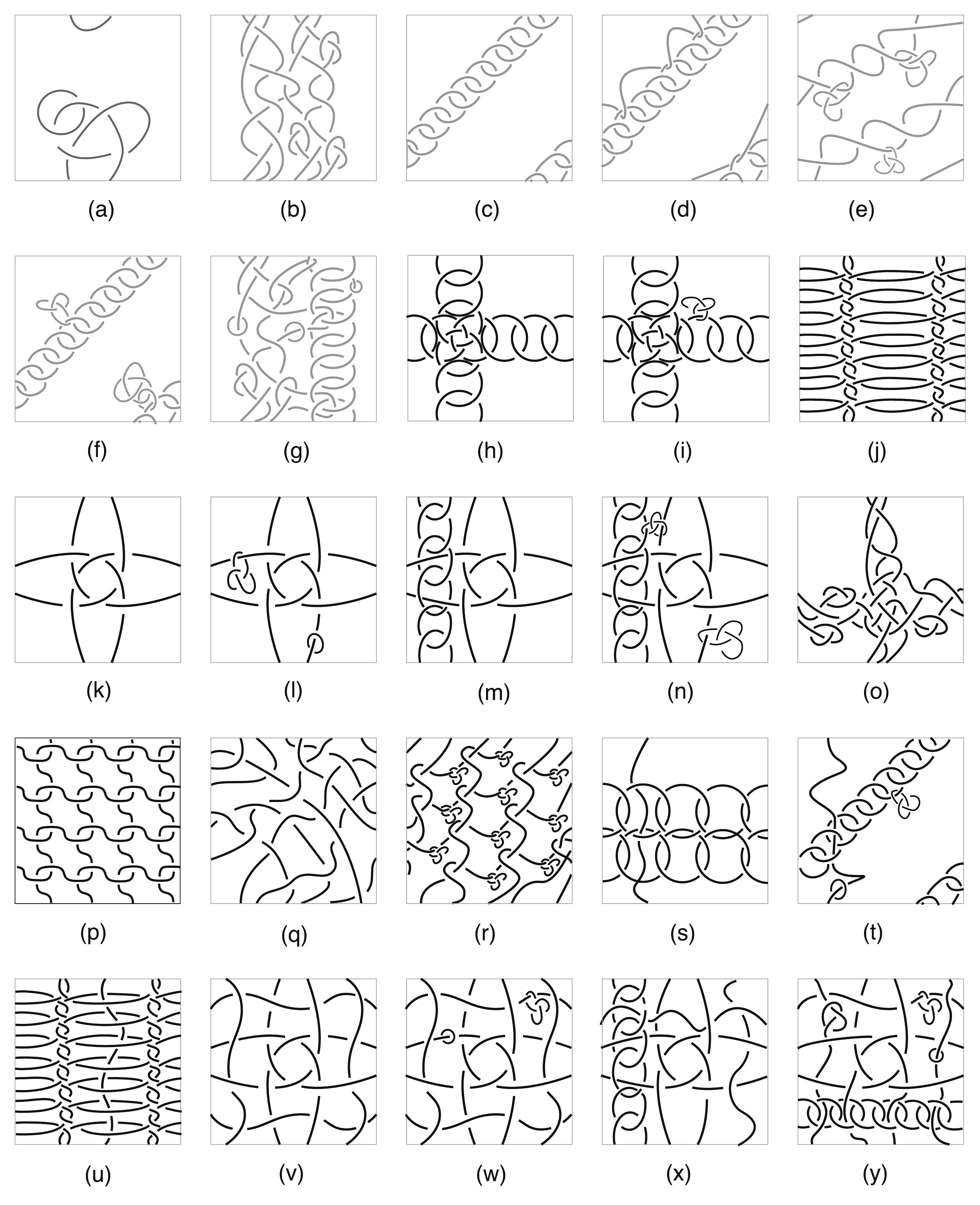}
\end{center}
\caption{\label{DPmotifs} Examples of flat motifs consisting of a single interlinked compound. Motif (a) is a null-homotopic compound, motifs (b)--(g) are ribbon compounds, and motifs (h)--(y) are cover compounds.}
\end{figure}

\begin{definition}\label{def:typescompound} \rm 
 An interlinked compound $\mu$ of a (flat) motif $\tau$ or a DP tangle $\tau_\infty$  belongs to one of the following classes: 
\begin{enumerate}
   \item[a.]  $\mu$ can be enclosed, up to motif isotopy, in a 3-ball in $T^2 \times I$. Then it is referred to as {\it null-homotopic compound} of $\tau$ and it lifts in $\mathbb{E}^2 \times I$ to a disjoint union of null-homotopic compounds.
    \smallbreak  
    
   \item[b.]  $\mu$ can  be enclosed, up to motif isotopy, in an essential thickened ribbon in $T^2 \times I$. In this case $\mu$ is referred to as {\it ribbon compound} of $\tau$ and it lifts in $\mathbb{E}^2 \times I$ to an infinite disjoint union of identical ribbon compounds in $\tau_\infty$. 
   \smallbreak  
   
   \item[c.] $\mu$ cannot be enclosed in either a 3-ball or a thickened ribbon in $T^2 \times I$, under any motif isotopy.  In this case $\mu$  is referred to as {\it cover compound} of $\tau$ and it lifts in $\mathbb{E}^2 \times I$ to a single cover compound in $\tau_\infty$. 
  \end{enumerate}
\end{definition} 

\noindent  Figure~\ref{DPmotifs} (a) shows an example of a null-homotopic compound. Figure~\ref{DPmotifs} (b) -- (g) are examples of ribbon compounds, while Figure~\ref{DPmotifs} (h) -- (y) are examples of cover compounds.

\begin{proposition}\label{classcompound} 
 The class of an interlinked compound as null-homotopic, ribbon or  cover is an invariant under motif equivalence resp.  DP tangle equivalence.
\end{proposition}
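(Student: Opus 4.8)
The plan is to establish invariance one generator at a time, following the list in Theorem~\ref{th:equivalence}: shifts, motif isotopy moves, Dehn twists, orientation-preserving affine transformations, and scale equivalence. In each case I would check that the move sends an interlinked compound to a compound of the same class, i.e. that it preserves the property of being enclosable, up to isotopy, in a $3$-ball, in an essential thickened ribbon but not a $3$-ball, or in neither.

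Two families of generators should be essentially immediate. Motif isotopy moves are built into the definition, since the three classes of Definition~\ref{def:typescompound} are stated ``up to motif isotopy''; hence isotopic compounds share a class, and a shift, being a rigid translation of the flat torus isotopic to the identity, falls under this case. The generators that are self-homeomorphisms of $T^2$, namely Dehn twists and orientation-preserving affine transformations (rotations, inflations/contractions, shears), I would treat uniformly: each is induced by a homeomorphism $h\colon T^2\to T^2$ extending to $h\times\mathrm{id}$ on $T^2\times I$. A homeomorphism carries $3$-balls to $3$-balls, and since $h$ preserves essentiality of simple closed curves it carries essential thickened ribbons to essential thickened ribbons (a Dehn twist may alter the slope of the core curve, but not its essentiality). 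Thus all three enclosure properties, and with them the class, are preserved.

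The hard part will be scale equivalence, as this is not a homeomorphism but a finite covering $p\times\mathrm{id}\colon T^2_1\times I\to T^2_0\times I$, under which a single compound may split into several. Here I would invoke the lifting behaviour recorded in Definition~\ref{def:typescompound} and argue as follows. For a null-homotopic compound enclosed in a $3$-ball $B$, the covering is trivial over the contractible set $B$, so $p^{-1}(B)$ is a disjoint union of $3$-balls and every lift remains null-homotopic; for a ribbon compound enclosed in a regular neighbourhood $R$ of an essential curve $\gamma$, the set $p^{-1}(R)$ is a disjoint union of regular neighbourhoods of the lifts of $\gamma$, each still essential, so every lift remains a ribbon compound. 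For the cover class I would characterise the three classes homologically, by the rank ($0$, $1$ or $2$) of the subgroup of $H_1(T^2)\cong\mathbb{Z}^2$ generated by the homology classes of the components of the compound, and then use that $p_*$ is an injection of $\mathbb{Z}^2$ into $\mathbb{Z}^2$ sending the classes of the lifts to nonzero multiples of the classes downstairs, hence rank-preserving in both directions of the covering.

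The delicate point I would want to settle carefully is that, because covers can break one compound into several and can alter global linking, no lifted compound degenerates to a lower class. The key observation is that a covering is a local homeomorphism, so the local interlinkings that bind a compound together are preserved; in particular every lifted compound still contains a lift of each essential direction present in the base compound, so its homological rank cannot drop. Granting, finally, that this homological rank coincides with the geometric enclosure classification of Definition~\ref{def:typescompound}—rank $0$ forcing enclosure in a $3$-ball, rank $1$ in an essential ribbon but not a $3$-ball, and rank $2$ forbidding both—the argument shows that every generator preserves the class, so the class is an invariant of motif equivalence and, via Theorem~\ref{th:equivalence}, of DP tangle equivalence.
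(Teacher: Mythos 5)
Your overall strategy---checking invariance generator by generator against Theorem~\ref{th:equivalence}---is exactly the paper's, and two of your three steps are sound: shifts and isotopies are immediate, and treating Dehn twists and orientation-preserving affine maps uniformly as homeomorphisms $h\times\mathrm{id}$ of $T^2\times I$ (carrying $3$-balls to $3$-balls and essential thickened ribbons to essential thickened ribbons) is actually cleaner than the paper's argument via intersections with the meridian and longitude. The genuine gap is in your treatment of scale equivalence. The homological characterization you propose---that the class of a compound is the rank of the subgroup of $H_1(T^2)\cong\mathbb{Z}^2$ generated by the homology classes of its components---is false. A chain-link ribbon compound (Figure~\ref{DPmotifs}(c)) consists entirely of null-homotopic components, so that subgroup is trivial of rank $0$, yet the compound is of ribbon class; a full-polycatenane compound (Figure~\ref{DPmotifs}(j),(k)) likewise has rank $0$ yet is of cover class. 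These are not marginal cases: they are two of the four \emph{basic interlinked compounds} of Subsection~\ref{subsec:special-cases}, and they are exactly the compounds for which splitting under finite covers is the real worry. For the same reason, your ``no degeneration'' argument---that every lifted compound contains ``a lift of each essential direction present in the base compound''---is vacuous whenever all components are null-homotopic, so it does not rule out that a lift of a chain-link or of a polycatenane becomes enclosable in a ball or a ribbon.

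The repair is to replace component homology by the deck-group stabilizer, which is the invariant implicitly recorded in the lifting statements of Definition~\ref{def:typescompound} and invoked by the paper's own proof (``it will preserve the class of $\mu$ in any motif or in the universal cover $\mathbb{E}^2\times I$''). Lift $\mu$ to $\mathbb{E}^2\times I$, let $P$ be an interlinked compound of $\tau_\infty$ contained in the preimage of $\mu$, and let $H\leq\Lambda_0\cong\mathbb{Z}^2$ be the subgroup of deck translations preserving $P$. Then $\operatorname{rank} H=0,1,2$ corresponds precisely to $\mu$ being null-homotopic, ribbon, or cover (this is the content of the parenthetical lifting descriptions in Definition~\ref{def:typescompound}), and it classifies chain-links (rank $1$) and polycatenanes (rank $2$) correctly even though all their components are null-homotopic. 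Since scale equivalence replaces $\Lambda_0$ by a finite-index sublattice $\Lambda_1$, and $H\cap\Lambda_1$ has the same rank as $H$ whenever $[\Lambda_0:\Lambda_1]<\infty$, every compound of the covering motif lying over $\mu$ has the same class as $\mu$, in both directions of the comparison. Your enclosure-lifting arguments (a covering is trivial over a $3$-ball; the preimage of an essential thickened ribbon is a disjoint union of essential thickened ribbons) are correct and worth keeping, but they only bound the class of a lift from above; the stabilizer rank is what prevents it from dropping.
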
 

\begin{proof} Let  $\mu$ be an interlinked compound of a (flat) motif $\tau$ or a DP tangle $\tau_\infty$. Recall from Theorem~\ref{th:equivalence} that DP tangle equivalence is generated by shifts, motif isotopy moves, Dehn twists, orientation preserving affine transformations and scale equivalence on the level of (flat) motifs.  We claim that the interlinked compound in the resulting motif under any of the these moves will be of the same  class as $\mu$. Indeed, under the effect of shifts, motif isotopy moves or orientation preserving affine transformations, the validity of the statement is straightforward. 

Let us consider the effect of Dehn twists. A Dehn twist does not alter the number of components or the number of crossings or the linking, and thus it preserves the class of an interlinked compound. More precisely, if $\mu$ is a null-homotopic compound in $T^2 \times I$ it will remain null-homotopic after a Dehn twist. If $\mu$ is a ribbon compound, it will have non empty intersections with the torus meridian or the torus longitude or both. Then, up to Dehn twists, $\mu$ can be arranged to intersect either the torus meridian or the torus longitude, and so remains a ribbon compound.  If $\mu$ is a cover compound, it will have non empty intersections with both the torus meridian and the torus longitude and this will not change under any  Dehn twist, so the result remains a cover compound.  

Let us, finally, consider the effect of scale equivalence. Note that scale equivalence may or may not change the number of components or interlinked compounds in a motif. Consider for example a flat motif $\tau$ containing only a single $(1,0)$-torus knot component (see Figure~\ref{scale-direction}, on the left), thus one interlinked compound $\mu$. Then, a scale equivalent flat motif $\tau'$  made of two copies of  $\tau$ would contain: either a single $(1,0)$-torus knot component, if it is a left-right copy, or  a $(2,0)$-torus link component if it is a top-down copy. In the first case $\tau$ and $\tau'$ both contain a single interlinked compound $\mu'$. In the second case, however, the  $(1,0)$-torus knot duplicates in $\tau'$. So,  $\tau'$ contains two disjoint identical interlinked compounds, either one of which belong to the same class as the original $\mu$. As another example, for a null-homotopic compound $\mu$, scale equivalence will increase the number of copies $\mu$, while a cover compound will remain a cover compound. In general, scale equivalence may increase  the number of copies of an interlinked compound $\mu$, but it will preserve the class of $\mu$ in any motif or in the universal cover $\mathbb{E}^2 \times I$ of $T^2 \times I$. Therefore DP equivalence does not alter the class of an interlinked compound.
\end{proof}

\begin{definition} \label{order-compounds} 
A cover compound is said to be {\it stronger} than a ribbon compound and both are said to be {\it stronger} than a null-homotopic compound. 
\end{definition}

\begin{corollary} \rm \label{classordering} 
The ordering {\it (cover,  ribbon,  null-homotopic)} is well-defined in the set of equivalence classes of interlinked-compounds under motif equivalence resp. DP tangle equivalence. 
\end{corollary}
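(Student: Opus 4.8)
The plan is to deduce this directly from Proposition~\ref{classcompound}. The content of being \emph{well-defined} is that the relation ``stronger than'' of Definition~\ref{order-compounds}, which is initially prescribed on the three abstract classes, can be transported to equivalence classes of interlinked compounds with no dependence on the chosen representative. So the whole statement should reduce to the invariance of the class already established.

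First I would record the purely combinatorial point that Definition~\ref{order-compounds} equips the three-element set $\{\text{null-homotopic}, \text{ribbon}, \text{cover}\}$ with the strict linear order $\text{null-homotopic} \prec \text{ribbon} \prec \text{cover}$. On a three-element chain, irreflexivity, antisymmetry and transitivity hold automatically, so this is a genuine total order; no verification beyond inspection is needed here.

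Next, let $c$ denote the map sending each interlinked compound to its class in $\{\text{null-homotopic}, \text{ribbon}, \text{cover}\}$. By Proposition~\ref{classcompound}, the value of $c$ is preserved under each generator of motif equivalence resp.\ DP tangle equivalence (shifts, motif isotopy moves, Dehn twists, orientation preserving affine transformations, and scale equivalence). Hence $c$ is constant on every equivalence class of interlinked compounds and therefore descends to a well-defined map $\bar{c}$ on the set of such equivalence classes.

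Finally, I would define the ordering on equivalence classes by pullback along $\bar{c}$: an equivalence class $[\mu_1]$ is declared \emph{stronger} than $[\mu_2]$ exactly when $\bar{c}([\mu_1]) \succ \bar{c}([\mu_2])$ in the chain above. Because $\bar{c}$ is well-defined, this declaration is independent of the representatives $\mu_1, \mu_2$, and the relation inherits transitivity and antisymmetry from the chain, giving a well-defined total ordering $(\text{cover}, \text{ribbon}, \text{null-homotopic})$ on equivalence classes. There is no real obstacle in this argument: the entire force of the corollary is carried by the invariance proved in Proposition~\ref{classcompound}, and the only point requiring care is to present the ordering explicitly as a pullback along $\bar{c}$, so that independence of representatives is manifest rather than assumed.
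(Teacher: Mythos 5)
Your proposal is correct and follows the same route as the paper, which simply states that the corollary is an immediate consequence of Proposition~\ref{classcompound}. Your explicit formulation of the class map descending to equivalence classes and the pullback of the ordering merely makes manifest what the paper leaves implicit.
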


The corollary is an immediate consequence of Proposition~\ref{classcompound}.

\subsection{Subclasses of interlinked compounds}\label{subsec:special-cases}

As {\it basic interlinked compounds} we consider the following: {\bf  null-homotopic compound, essential ribbon compound, chain-link ribbon compound,   full-polycatenane compound} (see descriptions below). All  other interlinked compounds can be created as interlinkages of the basic types. From all possibilities of the structure of an interlinked compound we will single out and name some cases of interest. In the lists below, the basic interlinked compounds are marked in bold face. The namings will be in accordance to the stronger class of compound involved in the interlinkage.  

Let $\mu$ be an interlinked compound of a motif $\tau$ resp. of the DP tangle $\tau_\infty$. We distinguish the cases:

\bigbreak

\noindent {\it Ribbon compounds:}  If $\mu$ is a ribbon compound then we have the following possibilities:

\smallbreak

\begin{itemize}
    \item  $\mu$ is an {\it {\bf essential ribbon compound} } if it consists of only essential closed curves in $T^2 \times I$. An example is demonstrated by each one of the three  interlinked compounds of motif (b) in Figure~\ref{DPmotifs}.  
\smallbreak

    \item  $\mu$ is a { \it {\bf  chain-link ribbon compound}} if it consists of an interlinked combination of only null-homotopic components. An example is the interlinked compound in Figure~\ref{DPmotifs}(c). 
\smallbreak

    \item  $\mu$ is a {\it chain-essential ribbon compound} if it consists of essential curves interlinked with chain-links (see for example Figure~\ref{DPmotifs}(d)).
\smallbreak

    \item  $\mu$ is a {\it null-essential ribbon compound} if it consists of an essential ribbon compound interlinked with a null-homotopic compound (see for example  Figure~\ref{DPmotifs}(e)).
\smallbreak

    \item  $\mu$ is a {\it null-chain ribbon compound} if it consists of a chain-link ribbon compound interlinked with a null-homotopic compound (see for example  Figure~\ref{DPmotifs}(f)).
\smallbreak

    \item  if $\mu$ is a combination of all the above, we shall be refer to as  {\it mixed ribbon compound} (for an illustration see Figure~\ref{DPmotifs}(g)). 
\end{itemize}

\bigbreak

\noindent \noindent {\it Cover compounds:} If $\mu$ is a cover compound then we have the following possibilities:

\smallbreak

\begin{itemize}
    \item  $\mu$ is a {\it polycatenane compound} if it consists of only interlinked null-homotopic components. In particular, polycatenane compounds can be distinguished as follows:
\smallbreak

    \begin{itemize}
      \item [*] $\mu$ is a {\it chain-polycatenane compound} if it can be defined as the interlinkage of at least two chain-link ribbon compounds. An example is illustrated in Figure~\ref{DPmotifs}(h),
      \smallbreak
      
      \item [*] $\mu$ is a {\it null-chain-polycatenane compound} if it consists of a chain-polycatenane compound interlinked with at least one  null-homotopic compound, as illustrated in Figure~\ref{DPmotifs}(i),
      \smallbreak
      
      \item [*]  $\mu$ is a {\it {\bf full-polycatenane compound }} if it consists of  null-homotopic components only,  without null-homotopic compounds, as the examples illustrated in Figure~\ref{DPmotifs}(j) and~(k),  
      \smallbreak
      
      \item [*]  $\mu$ is a {\it null-full-polycatenane compound} if it consists of a full-polycatenane compound interlinked with at least one null-homotopic compound, as the example illustrated in Figure~\ref{DPmotifs}(l),
      \smallbreak
      
      \item [*]  $\mu$ is a {\it chain-full-polycatenane compound} if it consists of a full-polycatenane compound interlinked with at least one chain-link compound, as the examples illustrated in Figure~\ref{DPmotifs}(m),
      \smallbreak
      
      \item [*] $\mu$ is a {\it null-chain-full-polycatenane compound} if it is a combination of all the above classes of polycatenanes, as the example illustrated in Figure~\ref{DPmotifs}(n),
    \end{itemize}
    \smallbreak

    \item  if $\mu$ contains essential closed curves, then we have the following possibilities:
    \smallbreak

    \begin{itemize}
      \item [*] $\mu$ is an {\it essential cover compound} if it consists in an interlinked combination of (only) essential ribbon compounds. This case is exemplified by the interlinked compound of motifs (o), (p) and (q) in Figure~\ref{DPmotifs}. 
      \smallbreak
      
      \item [*]  $\mu$ is a {\it null-essential cover compound} if it consists of essential cover compounds interlinked with null-homotopic compounds (see for example Figure~\ref{DPmotifs}(r)),
      \smallbreak
      
      \item [*]  $\mu$ is a {\it chain-essential cover compound} if it consists of closed curves interlinked with chain-links (see for example Figure~\ref{DPmotifs}(s)),
      \smallbreak
      
      \item [*]  $\mu$ is a {\it null-chain-essential cover compound} if it consists of essential closed curves interlinked with chain-links and null-homotopic compounds (see for example Figure~\ref{DPmotifs}(t)),
      \smallbreak
      
      \item [*]   $\mu$ is a {\it essential-full-polycatenane cover compound} if it consists of essential curves interlinked with a full-polycatenane (see examples in Figure~\ref{DPmotifs}(u) and~((v)).
      \smallbreak
      
      \item [*]  $\mu$ is a {\it null-essential-full-polycatenane cover compound} if it consists of essential curves interlinked with a full-polycatenane and with null-homotopic compounds (see for example Figure~\ref{DPmotifs}(w)),
      \smallbreak
      
      \item [*]   $\mu$ is a {\it chain-essential-full-polycatenane cover compound} if it consists of essential curves interlinked with chain links and with a full-polycatenane (see for example Figure~\ref{DPmotifs}(x)),
      \smallbreak
      
      \item [*]  if $\mu$ is a combination of all the above, we shall be refer to as {\it mixed cover compound} (see for example  Figure~\ref{DPmotifs}(y)).  
      \end{itemize}
      
\end{itemize} 

\noindent  By an adaptation of the proof of Proposition~\ref{classcompound} it is clear that the subclass of an interlinked compound does not change under DP tangle equivalence. Therefore we have the following result.
 
\begin{theorem} \label{th:subclasses} 
Let $\mu$ be an interlinked compound of a motif $\tau$ of a DP tangle $\tau_\infty$. The subclass of $\mu$ as listed above is a topological invariant of $\mu$. 
\end{theorem}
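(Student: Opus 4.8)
The plan is to follow the blueprint of Proposition~\ref{classcompound}, since the subclass is merely a finer invariant sitting inside the already-established class invariant. First I would isolate the combinatorial-topological data that determines the subclass of an interlinked compound $\mu$, namely: (a) the homotopy type (null-homotopic versus essential) of each constituent component of $\mu$ in $T^2 \times I$; (b) the decomposition of $\mu$ into the four basic compounds (null-homotopic, essential ribbon, chain-link ribbon, full-polycatenane), together with the class---null-homotopic, ribbon or cover---of each basic piece, which is already invariant by Proposition~\ref{classcompound}; and (c) the interlinking pattern, in particular which linkings are nonzero, among these basic pieces. Since every named subclass is, by construction, determined by recording which basic compounds occur and how they are interlinked, and is named after the strongest class involved (Definition~\ref{order-compounds}), whose ordering is well-defined by Corollary~\ref{classordering}, it suffices to show that the data (a)--(c) is preserved by each generating move of DP tangle equivalence from Theorem~\ref{th:equivalence}.

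For the moves that act by ambient isotopies or surface homeomorphisms---namely shifts, motif isotopy moves, and orientation-preserving affine transformations---the verification is immediate: each such move is realized by a homeomorphism carrying $\tau$ to the resulting motif, hence preserves the number of components, the homotopy class of each component, all linking numbers, and any enclosing $3$-ball or thickened ribbon. Consequently it preserves (a), (b) and (c) verbatim, so the subclass is unchanged.

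The case of Dehn twists requires only slightly more care. A Dehn twist is an orientation-preserving self-homeomorphism of the torus, so, as already observed in the proof of Proposition~\ref{classcompound}, it alters neither the number of components nor the crossings nor the linking; moreover it sends null-homotopic curves to null-homotopic curves and essential curves to essential curves. Thus each basic compound is carried to a basic compound of the same type, their interlinking is preserved, and the subclass is invariant.

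The main obstacle, exactly as in Proposition~\ref{classcompound}, is scale equivalence, because passing to a finite cover can change the number of components and even merge an $(a,b)$-torus knot into an $(na,nb)$-torus link, so one cannot read the subclass off a single motif in a naive move-by-move fashion. The resolution is to argue at the level of the universal cover $\mathbb{E}^2 \times I$: scale-equivalent motifs are quotients of the same DP tangle $\tau_\infty$ by commensurable lattices, and the data (a)--(c) can be read invariantly from $\tau_\infty$. A covering projection lifts null-homotopic components to null-homotopic ones and essential components to essential ones, and it respects the nesting of $3$-balls and thickened ribbons as well as the linking relations; hence it merely multiplies the copies of a basic compound without changing any basic compound's type or the interlinking pattern within a single compound. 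Therefore scale equivalence preserves the subclass, and combining all four cases with Proposition~\ref{classcompound} yields the theorem.
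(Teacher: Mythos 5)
Your proposal is correct and follows essentially the same route as the paper: the paper simply asserts that Theorem~\ref{th:subclasses} holds ``by an adaptation of the proof of Proposition~\ref{classcompound}'', and your move-by-move verification (shifts/isotopies/affine maps, then Dehn twists preserving the null-homotopic versus essential dichotomy and the linking, then scale equivalence handled via the universal cover, where passing to finite covers only multiplies copies of basic compounds) is exactly that adaptation, carried out explicitly. If anything, your write-up supplies more detail than the paper itself, in particular by isolating the data (a)--(c) that determines a subclass.
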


\subsection{The class of a motif} 

Following Definition~\ref{def:typescompound}, we now define the class of a motif as follows.

\begin{definition} \label{motif-compound} 
Let  $\tau$ be a motif of a DP tangle $\tau_\infty$. Then $\tau$ is said to be of the class {\it cover motif} if it contains only one or more cover compounds. It is said to be of the class {\it null-cover motif} if it consists of at least one cover compound and at least one null-homotopic compound. $\tau$ is said to be of the class {\it ribbon-cover motif} if it consists of at least one cover compound and at least one ribbon compound. It is said to be of the class {\it null-ribbon-cover motif} if it consists of at least one cover compound, as well as at least one ribbon compound, and at least one null-homotopic compound. Further, $\tau$ is said to be of the class {\it ribbon motif} if it contains only ribbon compounds. Moreover, $\tau$ is said to be of the class {\it null-ribbon motif} if it contains at least a ribbon compound and a null-homotopic compound. Finally, $\tau$ is said to be of the class {\it null-homotopic motif} if it only consists of null-homotopic compounds.  The above categories exhaust the possible {\it classes} of motifs and their respective DP tangles. 
\end{definition}

Proposition~\ref{classcompound}, Corollary~\ref{classordering} and Theorem~\ref{th:subclasses} lead now to the following culminating result.

\begin{theorem} \label{th:interlinked-compounds} 
Let  $\tau$ be a motif of  a DP tangle $\tau_\infty$.  The set of all subclasses of the interlinked compounds of $\tau$  resp. $\tau_\infty$ is a topological invariant of $\tau$  resp. $\tau_\infty$. 
 Furthermore, the class of  $\tau$ resp. of  $\tau_\infty$  as cover, null-cover, ribbon-cover, null-ribbon-cover, ribbon, null-ribbon, or null-homotopic  is a topological invariant of $\tau$  resp. $\tau_\infty$. 
\end{theorem}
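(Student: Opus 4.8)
The plan is to build Theorem~\ref{th:interlinked-compounds} in two stages, corresponding to its two assertions, and to rely almost entirely on results already proved: Proposition~\ref{classcompound}, Theorem~\ref{th:subclasses}, and Corollary~\ref{classordering}. First I would fix notation: given a motif $\tau$, let $\{\mu_1,\dots,\mu_k\}$ be its interlinked compounds, which are well-defined as a set once we invoke the uniqueness of the splitting into maximal non-splittable sublinks guaranteed by Definition~\ref{def:compound}. To each $\mu_i$ assign its subclass $s(\mu_i)$ from the list in Section~\ref{subsec:special-cases}. The first claim — that the set of subclasses of the interlinked compounds is an invariant — reduces to showing two things: (i) that the decomposition of $\tau$ into interlinked compounds is itself preserved (as an unordered collection) under each of the five generating moves of Theorem~\ref{th:equivalence}, and (ii) that the subclass label $s(\mu_i)$ is preserved, which is exactly the content of Theorem~\ref{th:subclasses}.

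For step (i) I would run through the five moves of Theorem~\ref{th:equivalence} exactly as in the proof of Proposition~\ref{classcompound}. Shifts, motif isotopy moves, and orientation-preserving affine transformations act as ambient homeomorphisms and therefore carry a split sublink to a split sublink and a non-splittable sublink to a non-splittable one; hence they induce a bijection of interlinked compounds preserving subclass. Dehn twists are orientation-preserving self-homeomorphisms of the torus, so the same argument applies, and Proposition~\ref{classcompound} already records that they preserve the coarse class. The only delicate move is scale equivalence, since — as the proof of Proposition~\ref{classcompound} explicitly notes — a finite cover may change the \emph{number} of interlinked compounds (a $(1,0)$-knot can duplicate, or two copies may merge into a $(2,0)$-link). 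This is exactly why the statement is phrased in terms of the \emph{set} of subclasses rather than a multiset or tuple: I would argue that scale equivalence may add or remove copies of a given subclass but never introduces a subclass not already present, nor destroys one that is, because each compound in the cover is a lift of, and projects to, a compound of the same subclass downstairs. Thus the underlying set of subclasses is stable, and combined with (ii) the first assertion follows.

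For the second assertion I would pass from the fine data (the set of subclasses) to the coarse data (the class of $\tau$ as one of cover, null-cover, ribbon-cover, null-ribbon-cover, ribbon, null-ribbon, null-homotopic). The key observation is that, by Definition~\ref{motif-compound}, the class of $\tau$ is determined purely by \emph{which of the three coarse classes} (cover, ribbon, null-homotopic) occur among its compounds — that is, by the image of the set of subclasses under the forgetful map sending each subclass to its coarse class in the sense of Definition~\ref{def:typescompound}. Since the first assertion gives that the set of subclasses is invariant, its image under this fixed forgetful map is invariant as well, and Corollary~\ref{classordering} guarantees that the resulting presence/absence pattern of $\{$cover, ribbon, null-homotopic$\}$ is unambiguously readable off. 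I would then simply check that each of the seven named classes corresponds to a distinct nonempty subset of $\{$cover, ribbon, null-homotopic$\}$ in which the strongest present class is recorded, so that the class of $\tau$ is a well-defined function of invariant data and is therefore itself invariant.

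I expect the main obstacle to be making step (i) rigorous for scale equivalence, specifically justifying that the set of subclasses (as opposed to their multiplicities) is genuinely preserved when compounds split or merge under finite covers. The argument that "a lift has the same subclass as its projection'' is intuitively clear and is already sketched for the coarse classes in Proposition~\ref{classcompound}, but to be careful one must verify it at the finer level of the subclass taxonomy — for instance that a cover that merges two chain-link ribbon compounds into a chain-polycatenane does not create a subclass absent from the invariant set of the other motif under comparison. In practice I would handle this by noting that scale-equivalent motifs $d_1,d_2$ are both finite covers of a common motif $d_0$ (as in the setup preceding Theorem~\ref{th:equivalence}), and that every subclass appearing in $d_1$ or $d_2$ already appears in $d_0$ and conversely; invariance of the set of subclasses then follows by comparing both to $d_0$. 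The remaining steps are bookkeeping and can be dispatched by direct appeal to the cited results.
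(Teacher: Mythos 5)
Your proposal is correct and follows essentially the same route as the paper: the paper obtains Theorem~\ref{th:interlinked-compounds} directly as a consequence of Proposition~\ref{classcompound}, Corollary~\ref{classordering} and Theorem~\ref{th:subclasses}, which are exactly the three ingredients you assemble, with the class of the motif read off from the invariant set of subclasses via the forgetful map to \{cover, ribbon, null-homotopic\}. Your extra care with scale equivalence (comparing both motifs to a common quotient $d_0$ and arguing that lifts preserve subclasses while only multiplicities change) is just a more explicit write-up of the bookkeeping the paper sketches inside the proof of Proposition~\ref{classcompound} and invokes again for Theorem~\ref{th:subclasses}.
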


\section{Directional invariants and axis-motifs of DP tangles} \label{sec:type}

In this section we introduce different types of DP tangles based on the {\it directions} of elementary constituents of theirs. Our intention is to construct invariants of directional type of DP tangles that will be defined using the notion of their axis-motifs. The definitions, discussions and results in this section about motifs and their corresponding DP tangles extend equally to their underlying diagrams, flat motifs and DP diagrams.

\subsection{The notion of directions}\label{sec:direction}

To define the notion of direction of a motif, as well as of its interlinked compounds, we start this subsection by introducing the more refined notion of a `directional element' of a motif. We start with the following definitions:

\begin{definition}\label{def:isolated}
 Let $\tau$ be a (flat) motif. 
 An {\it isolated knot} in $\tau$ is a component of a null-homotopic compound. This compound could be an interlinked compound by itself or may be interlinked with basic compounds of other stronger classes in $\tau$, according to Definition~\ref{order-compounds} (recall Subsection~\ref{subsec:special-cases}). 
\end{definition}
\noindent For example, in motif (n) of Figure 7, the upper small trefoil is an isolated knot interlinked with a ribbon and a cover compound. 

\begin{definition}\label{def:element}
 Let $\tau$ be a (flat) motif. 
 A {\it directional element} or just {\it element} of $\tau$ can be an isolated knot, an essential component, a chain-link compound, belonging to a ribbon or cover compound, or a full-polycatenane compound.
\end{definition}

\begin{example}
We provide examples of elements in the motifs of Figure~\ref{DPmotifs}:
\begin{itemize}
      \item [-] The motifs (a), (l), (n), (t), (w) and (y) contains two isolated knots, while motifs (e) and (f) contain three isolated knots.
      \smallbreak
      
      \item [-]  The motifs (b) and (o) contain five essential components, while motifs (p), (v), (w) contain four essential components and motif (q) six essential components.
      \smallbreak
      
      \item [-] The motifs (c), (d), (f), (g), (m), (n), (t), (x), (y) contain one chain link while the motifs (h) and (s) contain two chain links.
      \smallbreak
      
      \item [-] The motifs (j)-(n) and (u)-(y) contain a full polycatenane. 
\end{itemize}    
\end{example}

From Definitions~\ref{def:element}, \ref{def:typescompound} and the discussion on the Subsection~\ref{subsec:special-cases}, we can define the notion of direction for an element.

\begin{definition}\label{def:direction-element}
 The {\it direction of an element} is defined as follows. Let $e$ be an element of a (flat) motif $\tau$. Then:

 \begin{enumerate}
   \item[a.]  if $e$ is an {\it isolated knot}, thus homotopic to the trivial knot, then $e$ is said to have direction $(0,0)$.
   \smallbreak  
   
   \item[b.] if $e$ is an {\it essential component}, homotopic to an $(a,b)$-torus knot, then $e$ is said to have direction $(a,b)$. 
   \smallbreak   
   
   \item[c.] if $e$ is a {\it chain-lin}k, whose centerline curve is isotopic to an $(a,b)$-torus knot, then $e$ is said to have direction $(a,b)$.
   \smallbreak 
   
   \item[d.] if $e$ is a {\it full polycatenane}, then $e$ is said to have direction $(\infty,\infty)$. 
  \end{enumerate} 
\end{definition}

\begin{remark}
Two elements $e$ and $e'$ with respective directions $(a,b)$ and $(-a,-b)$ or $(-a,b)$ and $(a,-b)$ are said to have the {\it same direction}. 
    
Note that an $(a,b)$-torus knot/link and a $(b,a)$-torus knot/link differ only by the identification order of the opposite sides of a parallelogram, so by the exchange of meridian and longitude. In our theory we consider motifs in a (thickened) torus, so the (longitude, meridian) pair corresponds to the basis ($u$, $v$) of the Euclidean plane. Therefore, the pairs $(a,b)$ and $(b,a)$ are not considered to have the same direction.
\end{remark}

Using the notion of direction for an element we can define the notion of direction of an interlinked compound, which includes the cases where an element and an interlinked compound are identified.

\begin{definition}\label{def:direction-compound} 
The {\it direction of an interlinked compound} is defined as follows. Let $\mu$ be an interlinked compound of a (flat) motif $\tau$ or a DP tangle $\tau_\infty$. Then:
 \begin{enumerate}
   \item[N.]  if $\mu$ is a {\it null-homotopic compound} of $\tau$, then $\mu$ is said to have direction $(0,0)$.
    \smallbreak  
    
   \item[R.] if $\mu$ is a {\it ribbon compound} of $\tau$, whose centerline curve is isotopic to an $(a,b)$-torus knot, 
   then $\mu$ is said to have direction $(a,b)$. 
   \smallbreak  
   
   \item[C.] if $\mu$ is a {\it cover compound} of $\tau$, then we distinguish the following cases:
   \smallbreak
   \begin{enumerate}
       \item [1.] if $\mu$ is an {\it essential cover compound}, a {\it chain-polycatenane compound} or a {\it chain-essential cover compound}, then $\mu$ is an interlinkage of a finite set of distinct ribbon compounds $\{R_1, \ldots, R_n\}$, such that for all $i \in \{1, \ldots, n\}, R_i$ has direction $(a_i,b_i)$. Then $\mu$ is said to have direction $\{(a_1,b_1), \ldots, (a_n,b_n)\}$.
       \smallbreak
       
       \item [2.] if $\mu$ is a {\it null-essential cover compound} or {\it null-chain cover compound} or {\it null-chain-essential cover compound}, then $\mu$ satisfies conditions (N) and (1) and is thus said to have direction $\{(0,0), (a_1,b_1), \ldots, (a_n,b_n)\}$.
       \smallbreak
       
       \item [3.] if $\mu$ is a {\it full polycatenane}, then $\mu$ is said to have direction $(\infty,\infty)$.
       \smallbreak
       
       \item [4.] if $\mu$ is a {\it null-full-polycatenane}, then $\mu$ satisfies conditions (N) and (3) and is thus said to have direction $\{(\infty,\infty), (0,0)\}$.
       \smallbreak
       
       \item [5.] if $\mu$ is a {\it chain-full-polycatenane} or a {\it essential-full cover compound} or a {\it chain-essential-full cover compound}, then $\mu$ satisfies conditions (1) and (3) and is thus said to have direction $\{(\infty,\infty), (a_1,b_1), \ldots, (a_n,b_n)\}$.
       \smallbreak
       
       \item [6.] if $\mu$ is a {\it null-chain-full cover compound}, a {\it null-essential-full cover compound} or a {\it mixed cover compound}, then $\mu$ satisfies conditions (N), (1) and (3), and is thus said to have direction $\{(\infty,\infty), (0,0), (a_1,b_1), \ldots, (a_n,b_n)\}$.
   \end{enumerate}
  \end{enumerate}     
\end{definition}

Examples are given in the next subsection (see Examples~\ref{ex:fig-DP tangle}). 

\smallbreak
The notion of direction of a (flat) motif resp. a DP tangle follows now directly from Definition~\ref{def:direction-element}: 

\begin{definition}\label{def:direction}
The {\it direction of a (flat)  motif} resp. the {\it direction of a DP tangle} is defined as follows. Let $\tau$ be a (flat) motif of a DP tangle $\tau_\infty$. We define the direction of $\tau$ to be the set of directions of its elements, taken up to homotopy in the flat motif. Furthermore, the direction of $\tau_\infty$ is defined to be the same as the direction of $\tau$.  
\end{definition}

\begin{remark}\label{rem:classencoding}
The direction of a (flat)  motif (resp. a DP tangle) encodes the class of the motif (resp. the DP tangle), as defined in Definition~\ref{motif-compound}. 
\end{remark}

\begin{remark}
In \cite{Morton}, the notion of direction is defined on components of motifs and  only for motifs  that do not contain chain link compounds and polycatenane compounds. It is explained that one can assign to each component of a DP diagram (resp. a flat motif) an {\it axial type} (see also \cite{Grishanov.part2}). More precisely, let $u$ and $v$ be two independent elements associated to a DP diagram $d_{\infty}$ and let $G$ be the discrete group of invariant translations generated by  $u$ and $v$. Then, any component of $d_{\infty}$ is invariant under some subgroup of $G$, which is either trivial for a closed curve, or is infinite cyclic if generated by an element $w \in G$. The direction of the component refers thus to the axis along which it runs.
\end{remark}

\subsection{Axis-motifs}\label{sec:axis-motif}

Recalling Definition~\ref{def:direction-element}, it is clear that the direction of an element depends, in most cases, only on its homotopy type or the homotopy type of its ribbon. In this subsection we present a {\it  geometrical representation of the direction of a motif}, that we call an {\it axis-motif}, by replacing elements of the motif by null-homotopic closed curves or essential closed curves. To construct an axis-motif corresponding to a given motif, we first define the {\it axis} of an element:

\begin{definition}\label{def:axis-element}
 The {\it axis of an element}, as a (set of) closed curves in $T^2$, is defined as follows.  Let $e$ be an element of a (flat) motif $\tau$. Then:

 \begin{enumerate}
   \item[a.] if $e$ has direction $(0,0)$, then the axis of $e$ is a trivial knot,  preserving possible intersections with longitude-meridian and their orders.
   \smallbreak  
   
   \item[b.] if $e$ has direction $(a,b)$, then the axis of $e$ is an $(a,b)$-torus knot.
   \smallbreak  
   
   \item[c.] if $e$ has direction $(\infty,\infty)$, then the axis of $e$ is a flat link of as many trivial components as the  components of $e$, preserving the intersections with longitude-meridian and their orders.
  \end{enumerate}  
\end{definition} 
\noindent Note that the difference between cases (a) and (c) is that, up to isotopy, in case (a) there is always a flat  motif for the DP tangle which encloses $e$ in its interior, while this is not true for case (c). 
Some examples are given after the next two definitions. 

\smallbreak

The notion of axis can also be generalized to each interlinked compound of a motif.

\begin{definition}\label{def:axis-compound}
  The {\it axis of an  interlinked compound} is defined as follows.  Let $\mu$ be an interlinked compound of a (flat) motif $\tau$. Then: 
  
  \begin{enumerate}
   \item[a.]  if $\mu$ is a null-homotopic compound of $\tau$ defined as a link of $c$ knots, then the axis of $\mu$ is a trivial link of $c$ components.
   \smallbreak 
   
   \item[b.] if $\mu$ is a ribbon compound of $\tau$ defined as a link of $k$ elements with direction $(a,b)$ and $c$ isolated knots, where $c = 0,1,\ldots$, then the axis of $\mu$ consists of a $(ka,kb)$-torus link and of a trivial link of $c$ components.
   \smallbreak  
   
   \item[c.] if $\mu$ is a cover compound of $\tau$, then the axis of $\mu$ is the set of  the axes of all its elements. 
  \end{enumerate} 
\end{definition}
\noindent Hence, the axis of an element or an interlinked compound realizes geometrically its direction. 

\smallbreak
We can now define the notion of axis-motif of a motif as follows: 

\begin{definition}\label{def:axis-motif}
    The {\it axis-motif} $\alpha(\tau)$ {\it of a (flat) motif } $\tau$ is defined as the projection of a  (flat) motif formed by replacing each element of $\tau$  by its axis,  and the axes are taken up to homotopy preserving the  orders of the intersections with longitude-meridian.  An axis motif that corresponds to a minimal motif shall be called {\it minimal axis-motif of } $\tau$.
\end{definition}

In other words, the axis-motif of a motif is the topological blueprint of its direction. Topological, since each direction is taken up to homotopy (especially in the case of flat motifs), and blueprint  since the direction of every element is represented in the axis-motif. In this context, a minimal axis-motif is a minimal blueprint of $\tau$.

\begin{examples}\label{ex:fig-DP tangle}
In Figure~\ref{slope-curves} we present examples of directions and axis-motifs corresponding to motifs of DP tangles.

\begin{itemize} 
   \item[-] Motif (a) contains two isolated knots and three essential components of the same direction. So, its direction is the set $\{(0,0), (1,0)\}$ of 2 distinct elements, and its corresponding axis-motif (a$'$) contains two trivial knots and a $(3,0)$-torus link.
   \smallbreak
   
   \item[-] Motif (b) contains three disjoint ribbon compounds; the  leftmost consists of two essential components, the middle one contains three essential component and the rightmost is a chain-link. The three compounds have the same direction, thus the motif has direction the singleton $\{(0,1)\}$, and its corresponding axis-motif (b$'$) is a $(0,6)$-torus link. 
   \smallbreak
   
   \item[-] Motif (c) contains a null-essential ribbon compound made of two essential components and three isolated knots. So, it has direction $\{(0,0), (2,1)\}$ and its corresponding axis-motif (c$'$) contains a $(4,2)$-torus link and three trivial knots.
   \smallbreak
   
   \item[-] Motif (d) contains an essential cover compound made of three essential components. It has direction $\{(1,0), (1,2), (-1,2)\}$ and its corresponding axis-motif (d$'$) contains a $(1,0)$-torus knot, a $(1,2)$-torus knot and a $(-1,2)$-torus knot.
   \smallbreak
   
   \item[-] Motif (e) contains a chain-polycatenane cover compound made of two interlinked chain-links, and a null-homotopic compound consisting of a single knot. It has direction $\{(0,0), (1,0), (0,1)\}$ and its corresponding axis-motif (e$'$) contains a $(1,0)$-torus knot, a $(0,1)$-torus knot and a trivial knot.
   \smallbreak
   
   \item[-] Motif (f) contains a full-polycatenane cover compound made of one knot. It has direction $\{(\infty,\infty)\}$ and its corresponding axis-motif (f$'$) contains one non-contractible  trivial knot.
   \smallbreak 
   
   \item[-] Motif (g) contains a full-polycatenane cover compound made of fourteen components. It has direction $\{(\infty,\infty)\}$ and its corresponding axis-motif (g$'$) consists of fourteen  trivial knots, preserving the longitude/meridian intersections.
   \smallbreak
   
   \item[-] Motif (h) contains a cover compound consisting of an interlinkage of: a full-polycatenane of one knot, a chain-link, three essential curves (pink, green, black), and a trivial knot. It also contains two null-homotopic compounds of one and two components respectively. It has direction $\{(\infty,\infty), (0,0), (1,0), (0,1)\}$ and its corresponding axis-motif (h$'$) contains a $(2,0)$-torus link, a $(0,2)$-torus link, four trivial knots, and one non-contractible  trivial knot.
\end{itemize}
\end{examples}

\begin{figure}[H]
\begin{center}
\includegraphics[width=5in]{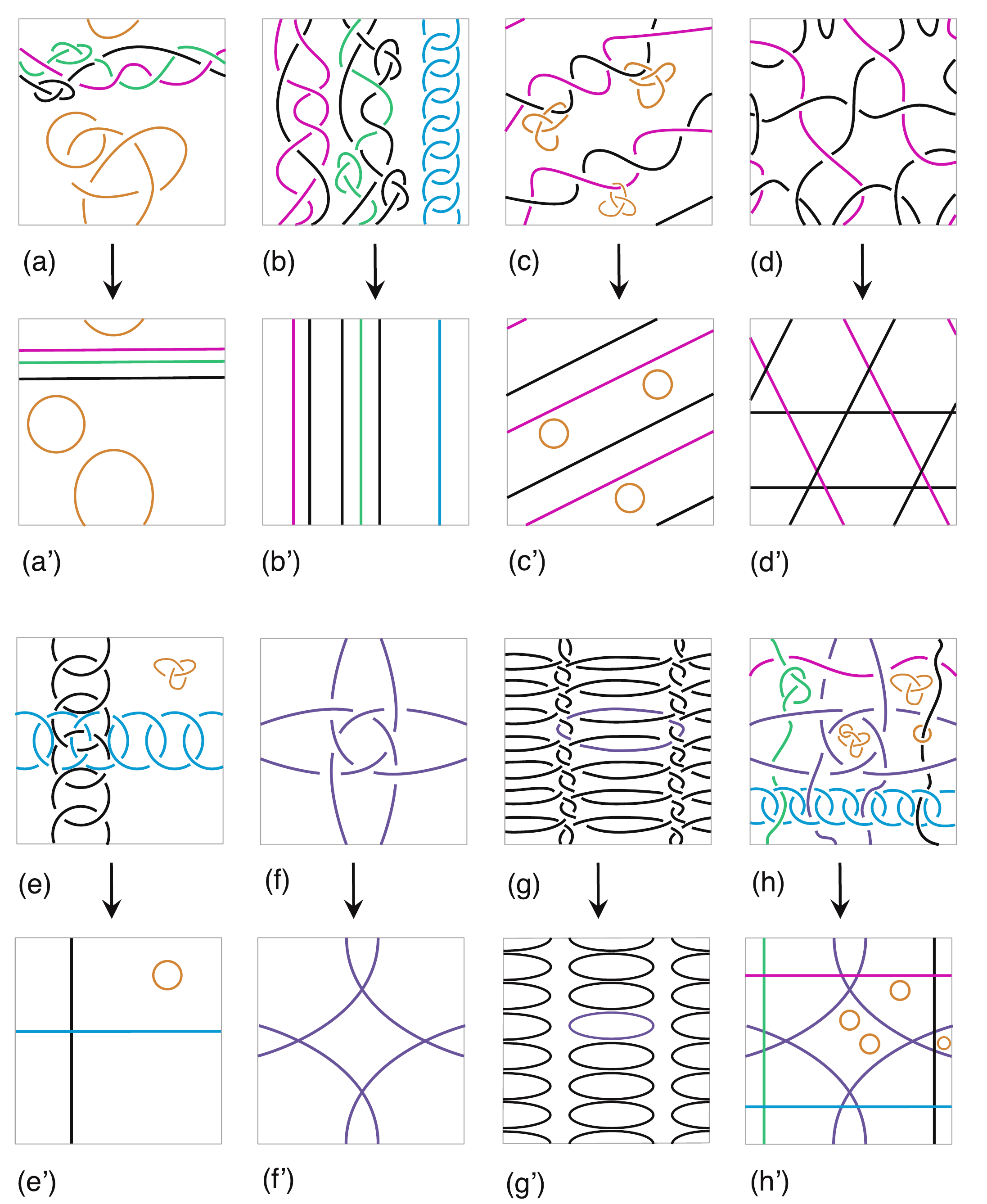}
\end{center}
\caption{\label{slope-curves} The corresponding  axis-motifs of the flat motifs in Figure~\ref{interlinked-compounds}. Color coding helps identify elements in the motif and their corresponding axes in the axis motif.}
\end{figure}

\subsection{Directional types of DP tangles} \label{sec:directional-type}

In this subsection we discuss the invariance of the direction of a motif under equivalence of its DP tangle, as characterized by Theorem~\ref{th:equivalence}. 

\smallbreak
Since the notion of direction has been defined considering homotopy of the components and their intersections with the torus longitude and meridian, we can distinguish the following cases. First, it is immediate that the direction of a motif is invariant under shifts, motif isotopies, and re-scaling transformations. Regarding Dehn twists, it is clear that they preserve the direction of an isolated knot or of a full polycatenane. However, the direction of an element with direction $(a,b)$  is not preserved under Dehn twists. Finally, it is also clear that scale equivalence preserves the direction of an isolated knot or of a full polycatenane. However, the direction of an element with direction $(a,b)$ is not preserved under scale equivalence (as for example in Figure~\ref{scale-direction}~(c)), except  for the case where an element has the same direction of the torus longitude or the torus meridian, as illustrated in Figure~\ref{scale-direction}~(a) and~(b). 

\begin{figure}[H]
\begin{center}
\includegraphics[width=5in]{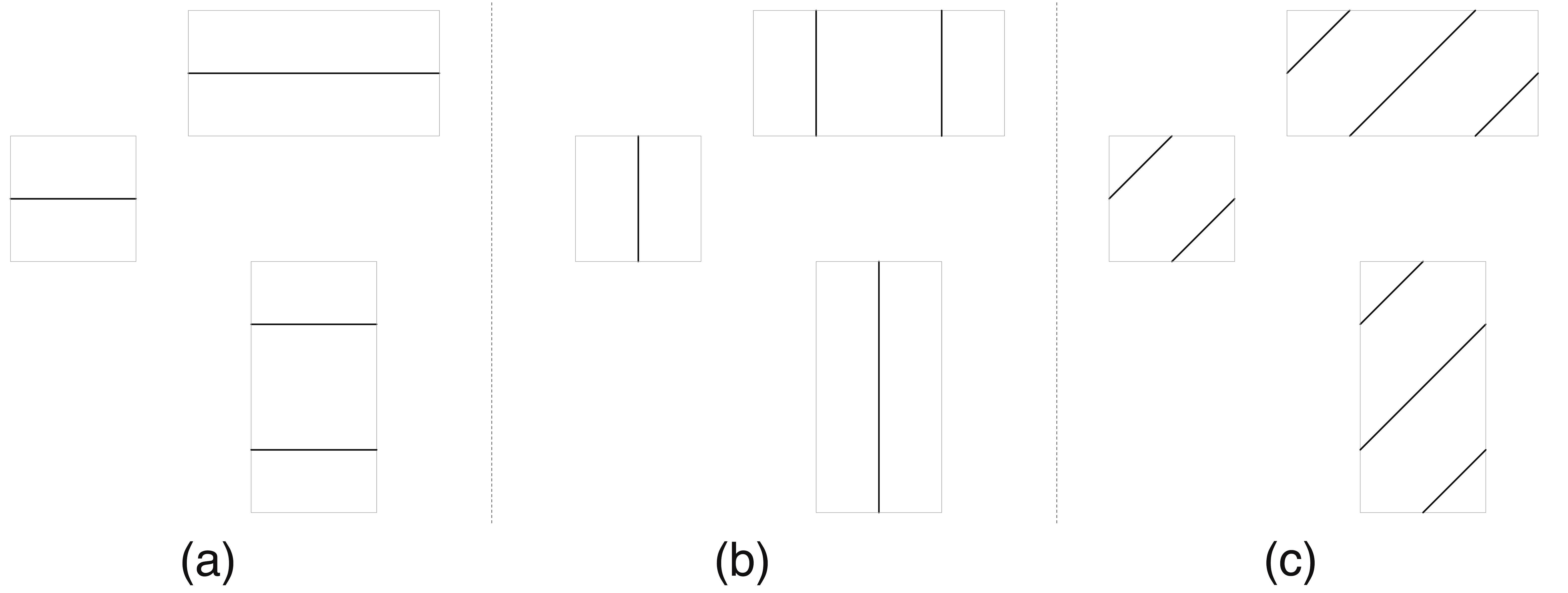}
\end{center}
\caption{\label{scale-direction} Three flat motifs and their double covers: (a) the three motifs have direction $\{(1,0)\}$; (b) the three motifs have direction $\{(0,1)\}$; (c) the motif on the left has direction $\{(1,1)\}$, the motif on the top has direction $\{(2,1)\}$, the motif on the bottom has direction $\{(1,2)\}$.}
\end{figure}

In conclusion, {\it the direction of a motif is not an invariant of its corresponding DP tangle}, if it contains essential components and chain-links. Yet, as observed in \cite{Grishanov.part2} for the case of minimal motifs consisting only of essential components, {\it the total number of distinct directions} of a motif resp. DP tangle is a topological invariant of the DP tangle. Indeed, this number is invariant under Dehn twists, since, for example, all torus knots are Dehn-equivalent, and analogously for torus links (recall that all torus knots and torus links, lift, by shearing transformation, to isotopic DP tangles \cite{DLM}). Moreover, by restricting to minimal motifs, scale-equivalence does not apply.

\smallbreak
Here we generalize the above result of \cite{Grishanov.part2} to any class of motifs and we do not restrict  to only minimal ones. Namely, we have  the following result. 

\begin{theorem} \label{th:distinct-directions} 
The total number of distinct directions of a (flat) motif resp. DP tangle is a topological invariant of the DP tangle.
\end{theorem}  

\begin{proof}
Indeed, let  $\tau$ be a (flat) motif in $T^2 \times I$  of any class and let $\tau_{\infty}$ be the corresponding DP tangle in $\mathbb{E}^2 \times I$. Theorem~\ref{th:interlinked-compounds} ensures that the class and subclass of   $\tau$  will remain invariant under DP tangle equivalence (cf. Theorem~\ref{th:equivalence}).  Therefore, by the Definitions~\ref{def:direction-element},~\ref{def:direction-compound},and~\ref{def:direction}, which apply to motifs of any class and subclass, the total number of distinct directions of   $\tau$ resp. $\tau_{\infty}$  is clearly invariant under local isotopy moves, shift equivalence, re-scaling transformations, and orientation preserving affine transformations. 
  
Regarding, specifically, Dehn twists, the same arguments as in \cite{Grishanov.part2} apply for motifs of any class and subclass. Namely a Dehn twist will preserve the directions $(0,0)$ and $(\infty,\infty)$ and will change a direction $(a,b)$ to a direction $(a',b')$, but not to $(0,0)$ or $(\infty,\infty)$. So, the number of distinct directions is preserved. 

Scale equivalence, finally, will preserve the directions $(0,0)$ and $(\infty,\infty)$ and may change a direction $(a,b)$ to a direction $(a',b')$, but not to $(0,0)$ or $(\infty,\infty)$, as discussed above (see Figure~\ref{scale-direction}). Moreover, since the direction of a motif is defined as a set, even if a motif that is a multiple of $\tau$ may contain more components of the same direction, its cardinality remains unchanged. The   proof of the theorem is now concluded.
\end{proof}

As examples to illustrate Theorem~\ref{th:distinct-directions}, observe that the number of directions of the DP tangle illustrated in  Figures~\ref{shearing} and~\ref{crossing} remains equal to two after scale equivalence, Dehn twists and shifts. Similarly, the number of directions  of the DP tangle of Figure~\ref{Tknot-Tlink} (that is, one) is invariant under re-scaling transformations and scale equivalence.

\begin{note}
   If the number of distinct directions is at least two, then the motif (resp. the DP tangle) is of class cover. However, for a direction of cardinality one, we cannot distinguish the different classes of motif (resp. of DP tangle).  
\end{note}

According to Definition~\ref{def:direction}, DP tangles can be distinguished into different {\it directional types}, for which we introduce the following notation:

\begin{definition}  \label{def:DP type} 
Let $\tau$ be a motif in $T^2 \times I$ and $\tau_{\infty}$ in $\mathbb{E}^2 \times I$. 
\begin{itemize} 
   \item [-] if $\tau$ has direction $\{(0,0)\}$, then $\tau$ and $\tau_{\infty}$ are said to be of {\it type $0$} or a {\it $0$-motif} and a {\it $0$-DP tangle}, respectively. 
   \smallbreak
   
   \item [-] if $\tau$ has direction $\{(\infty,\infty)\}$, then $\tau$ and $\tau_{\infty}$ are said to be of {\it type $\infty$} or a {\it $\infty$-motif} and a {\it $\infty$-DP tangle}, respectively. 
   \smallbreak   
   
   \item [-] if $\tau$ has a set of direction of cardinality $N$ that does not contains $(\infty,\infty)$ nor $(0,0)$, then $\tau$ and $\tau_{\infty}$ is said to be of {\it type $N$} or a {\it $N$-motif} and a {\it $N$-DP tangle}, respectively. 
   \smallbreak
   
   \item [-] if $\tau$ has direction $\{(\infty,\infty), (0,0)\}$, then $\tau$ and $\tau_{\infty}$ are said to be of {\it type $(0,\infty)$} or a {\it $(0,\infty)$-motif} and a {\it $(0,\infty)$-DP tangle}, respectively.
   \smallbreak
   
   \item [-] if $\tau$ has a set of direction of cardinality $N+1$, containing $(0,0)$ but not $(\infty,\infty)$, then $\tau$ and $\tau_{\infty}$ is said to be of {\it type $(N,0)$} or a {\it $(N,0)$-motif} and a {\it $(N,0)$-DP tangle}, respectively. 
   \smallbreak
   
   \item [-] if $\tau$ has a set of direction of cardinality $N+1$, containing $(\infty,\infty)$ but not $(0,0)$, then $\tau$ and $\tau_{\infty}$ is said to be of {\it type $(N,\infty)$} or a {\it $(N,\infty)$-motif} and a {\it $(N,\infty)$-DP tangle}, respectively. 
   \smallbreak
   
   \item [-] if $\tau$ has a set of direction of cardinality $N+2$, containing $(\infty,\infty)$ and $(0,0)$, then $\tau$ and $\tau_{\infty}$ is said to be of {\it type $(N, 0,\infty)$} or a {\it $(N,\infty,0)$-motif} and a {\it $(N,0, \infty)$-DP tangle}, respectively. 
   \end{itemize}
\end{definition}

Some examples of types of motifs are presented in Figure~\ref{DPs}. We can now state the following result, which is a refinement of Theorem~\ref{th:distinct-directions}.

\begin{theorem} \label{th:directional types} 
The  directional type of a DP tangle is a topological invariant of the DP tangle.
\end{theorem}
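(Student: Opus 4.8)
The plan is to reduce the statement to Theorem~\ref{th:distinct-directions} together with two separate presence-invariance claims. First I would observe that, according to Definition~\ref{def:DP type}, the directional type of $\tau$ is completely determined by the triple consisting of: the number $N$ of distinct finite nonzero directions (those arising from essential components and chain-links), the indicator of whether $(0,0)$ belongs to the direction set (equivalently, whether $\tau$ possesses an isolated knot, i.e. a component of some null-homotopic compound, by Definitions~\ref{def:isolated} and~\ref{def:direction-element}), and the indicator of whether $(\infty,\infty)$ belongs to the direction set (equivalently, whether $\tau$ possesses a full-polycatenane element). Running through the list in Definition~\ref{def:DP type}, each named type corresponds to exactly one admissible value of this triple, so it suffices to prove that each of the three data is invariant under DP tangle equivalence.

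The number $N$ I would handle indirectly. By Theorem~\ref{th:distinct-directions}, the total number of distinct directions, which equals $N$ plus the two indicators, is already a topological invariant. Hence it is enough to establish that the two indicators are separately invariant; the invariance of $N$ then follows by subtraction. For the indicators I would invoke the subclass invariance of Theorem~\ref{th:interlinked-compounds} (resp. Theorem~\ref{th:subclasses}). The presence of the direction $(0,0)$ is equivalent to the existence, among the interlinked compounds of $\tau$, of at least one whose subclass carries a null-homotopic constituent (a null-homotopic compound itself, or the null-part of any of the null-prefixed ribbon or cover subclasses); similarly, the presence of $(\infty,\infty)$ is equivalent to the existence of a compound whose subclass involves a full-polycatenane. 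Since Theorem~\ref{th:interlinked-compounds} guarantees that the set of subclasses of the interlinked compounds is preserved under shifts, motif isotopies, Dehn twists, orientation preserving affine transformations and scale equivalence, both presence conditions are preserved. Alternatively, one may argue directly on the five generating moves exactly as in the proof of Theorem~\ref{th:distinct-directions}: a null-homotopic curve stays null-homotopic and a full-polycatenane stays a full-polycatenane under all of them.

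Combining the three invariances, the triple $(N, \text{presence of } (0,0), \text{presence of } (\infty,\infty))$ is preserved, and therefore so is the directional type, which concludes the argument. The step I expect to be the crux is precisely the claim that the equivalence moves never merge the three buckets of directions, that is, that a finite nonzero direction $(a,b)$ can only be sent to another finite nonzero direction $(a',b')$ and never created from, or collapsed into, $(0,0)$ or $(\infty,\infty)$. This separation is what upgrades the mere count of Theorem~\ref{th:distinct-directions} to the finer directional type, and it rests on the homotopy-type stability of null-homotopic components and the structural rigidity of full-polycatenanes under Dehn twists and scale equivalence, both already isolated in the proof of Theorem~\ref{th:distinct-directions}.
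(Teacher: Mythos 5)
Your proposal is correct, and at its core it follows the route the paper intends, but in a substantially more complete form. The paper's entire proof reads: ``The proof is a direct consequence of Theorem~\ref{th:distinct-directions} and Definition~\ref{def:DP type}.'' Taken literally, the \emph{statement} of Theorem~\ref{th:distinct-directions} (invariance of the cardinality of the direction set) does not suffice: the sets $\{(0,0),(1,0)\}$ and $\{(1,0),(2,1)\}$ have the same cardinality but give different directional types, so one must also know that the generating moves never exchange the three kinds of directions. That separation fact is established only \emph{inside} the paper's proof of Theorem~\ref{th:distinct-directions} (Dehn twists and scale equivalence fix $(0,0)$ and $(\infty,\infty)$ and send $(a,b)$ to some $(a',b')$, never collapsing one bucket into another), and it is what the one-line proof implicitly invokes. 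You correctly identify this as the crux and supply it twice over: once by citing those bucket-preservation arguments directly, and once by a different decomposition via Theorems~\ref{th:subclasses} and~\ref{th:interlinked-compounds}, using the invariant set of subclasses to pin down the two indicators (presence of $(0,0)$, i.e.\ of a null-homotopic constituent, and presence of $(\infty,\infty)$, i.e.\ of a full-polycatenane) and then recovering the count $N$ of finite nonzero directions by subtracting the indicators from the invariant total. That subtraction-through-subclasses step is a mild detour not present in the paper, but it is sound, and it buys a proof that does not depend on reopening the proof of Theorem~\ref{th:distinct-directions}; the paper's version buys brevity at the cost of leaving the decisive separation argument implicit.
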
 

\begin{proof}
The proof is a direct consequence of Theorem~\ref{th:distinct-directions} and Definition~\ref{def:DP type}.
\end{proof}

\begin{remark}
The number of distinct directions  of a (flat) motif  $\tau$ is the same as the number of distinct directions  of its axis-motif $\alpha(\tau)$. We observe that the axis-motif  facilitates the identification of the directions and, consequently, of the number of distinct directions of the DP tangle $\tau_\infty$, which is usually not trivial, as one can confirm in the examples of Figure~\ref{scale-direction}. In fact, the axis-motif captures more that just the number of distinct directions or even the directional type of $\tau$:  it also depicts {\it the number of elements of the same direction}. We note further that the axis-motif is  sensitive to changes of the number of elements in scale equivalence, as well as to changes of axes of elements under Dehn twists. In the next section we shall exploit further these observations.
\end{remark}

\begin{figure}[H]
\begin{center}
\includegraphics[width=4.7in]{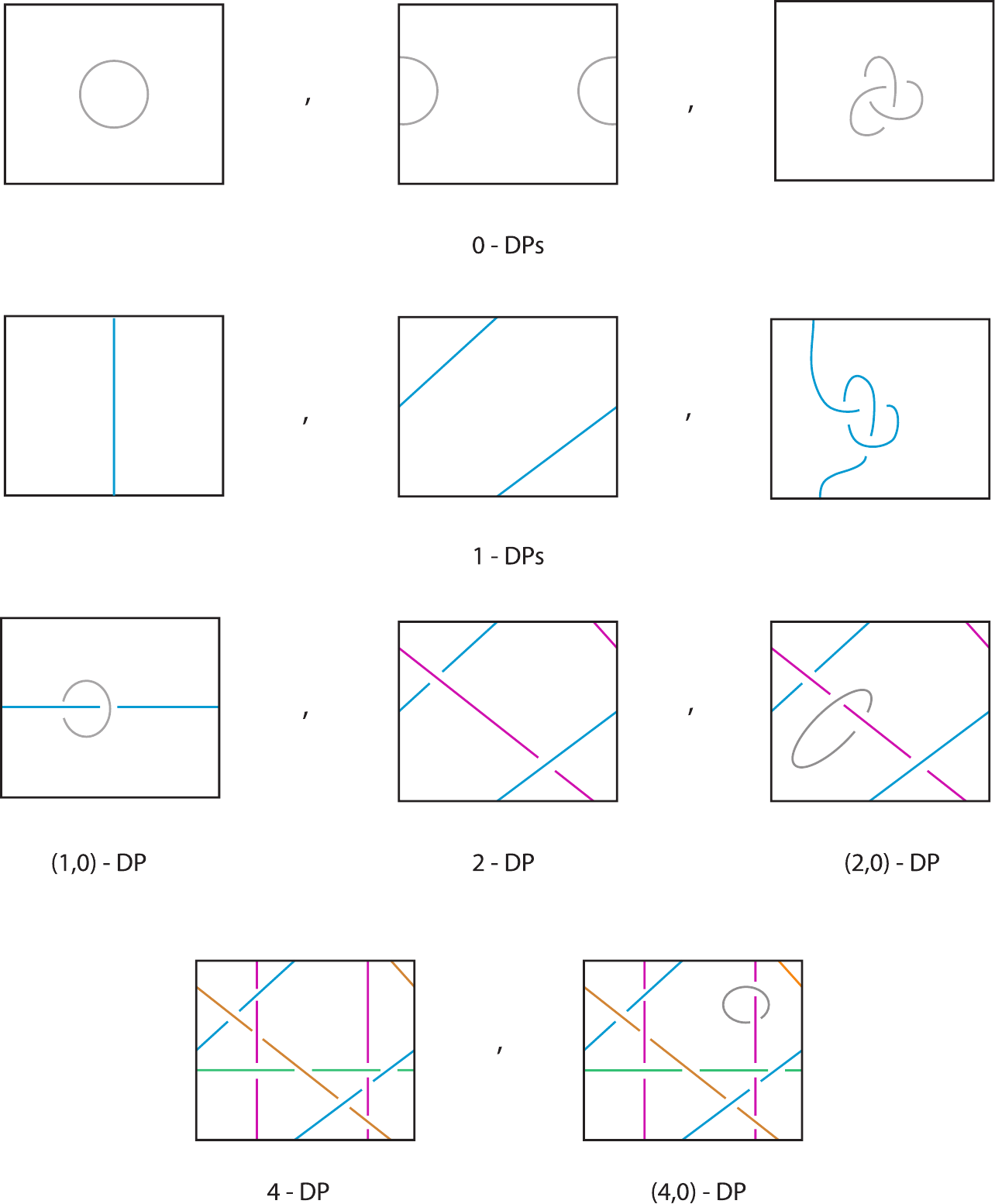}
\end{center}
\caption{\label{DPs} Different directional types of motifs. First row: three 0-DP motifs. Second row: three 1-DP motifs. Third row: a (1,0)-DP motif, a 2-DP motif and a (2,0)-DP motif. Fourth row: a 4-DP motif and a (4,0)-DP motif. Different colors indicate different directions.}
\end{figure}

\section{Comparing invariants and computing examples}\label{sec:tables}

The directional invariants defined herein, namely the class and subclasses, the directional type and the axis-motif, serve as a valuable tool in constructing a classification table for DP tangles. It is worth noting that each one of these invariants alone is insufficient for classifying the topological complexity of a DP-tangle. Besides, recall that many numerical invariants have been introduced in \cite{Grishanov.part2} for textiles (weaves and knits), which can add to the classification efforts. We single out three of them:
\begin{enumerate}
    \item [-] the {\it crossing number}, defined as the minimal number of crossings in a motif of a DP tangle $\tau_{\infty}$, among all possible minimal motifs of $\tau_{\infty}$.
    \smallbreak
    \item [-] the {\it number of components}, defined as the number of component in any minimal motif of a DP tangle.
    \smallbreak
    \item [-] the {\it axis number}, which refers to the number of directions in this paper.
\end{enumerate}

\begin{remark}
  The above definitions of {\it crossing number} and {\it number of components} clearly extend to any DP tangle (not just textiles) and to the set of all motifs, not only minimal. Then they are clearly DP tangle invariants in the general case,  since they remain invariant under the equivalence relation stated in  Theorem~\ref{th:equivalence}. For the {\it axis number} 
  we already mentioned above how they relate to our {\it number of directions} and {\it directional type}, only here we define these invariants for all possible motifs, not just fabric-related. 
\end{remark}

In Tables~\ref{table1} and~\ref{table2} we present several examples of minimal motifs, where we compute the number of directions, the directional type, the class and the minimal axis-motif for each DP tangle, comparing at the same time with their crossing number and the number of components. For comparison reasons, many of the motifs studied in \cite{Grishanov.part2} appear in Tables~\ref{table1} and~\ref{table2}.

Our computations show that the invariants introduced in the present work are distinct from crossing number and number of components. Indeed, they can distinguish some motifs of Tables~\ref{table1} and~\ref{table2} as follows. 

\begin{itemize}
    \item [*] motif 3 differs from motifs 1 and 2 only by its class,
    \smallbreak
    
    \item [*] motif 4 differs from motifs 5 and 6 by its number of directions, its directional type and its class,
    \smallbreak
    
    \item [*] motifs 7 and 8 differ by their number of directions, their directional type and their class,
    \smallbreak
    
    \item [*] motif 11 differs from motifs 9 and 10 by its directional type and its set of subclasses,
    \smallbreak
    
    \item [*] motifs 12 and 13 differ by their directional type and their set of subclasses,
    \smallbreak
    
    \item [*] motifs 13 and 14 only differ by their class,
    \smallbreak
    
    \item [*] motif 15 differs from motifs 13 and 14 by its directional type and its class,
    \smallbreak
    
    \item [*] motifs 16 and 17 differ by their number of directions, their directional type and their set of subclasses,
    \smallbreak
    
    \item [*] motif 18 differs from motifs 19 and 20 by its number of directions, its directional type and its class and set of subclasses,
    \smallbreak
    
    \item [*] motifs 19 and 20 only differ by their set of subclasses.
\end{itemize}

Finally, it is also worth mentioning that even the combination of all these invariants cannot distinguish some motifs, as for example motifs 1 and 2, motifs 5 and 6, or motifs 9 and 10.


\begin{table}[H]
\centering
   \includegraphics[width=5.3in]{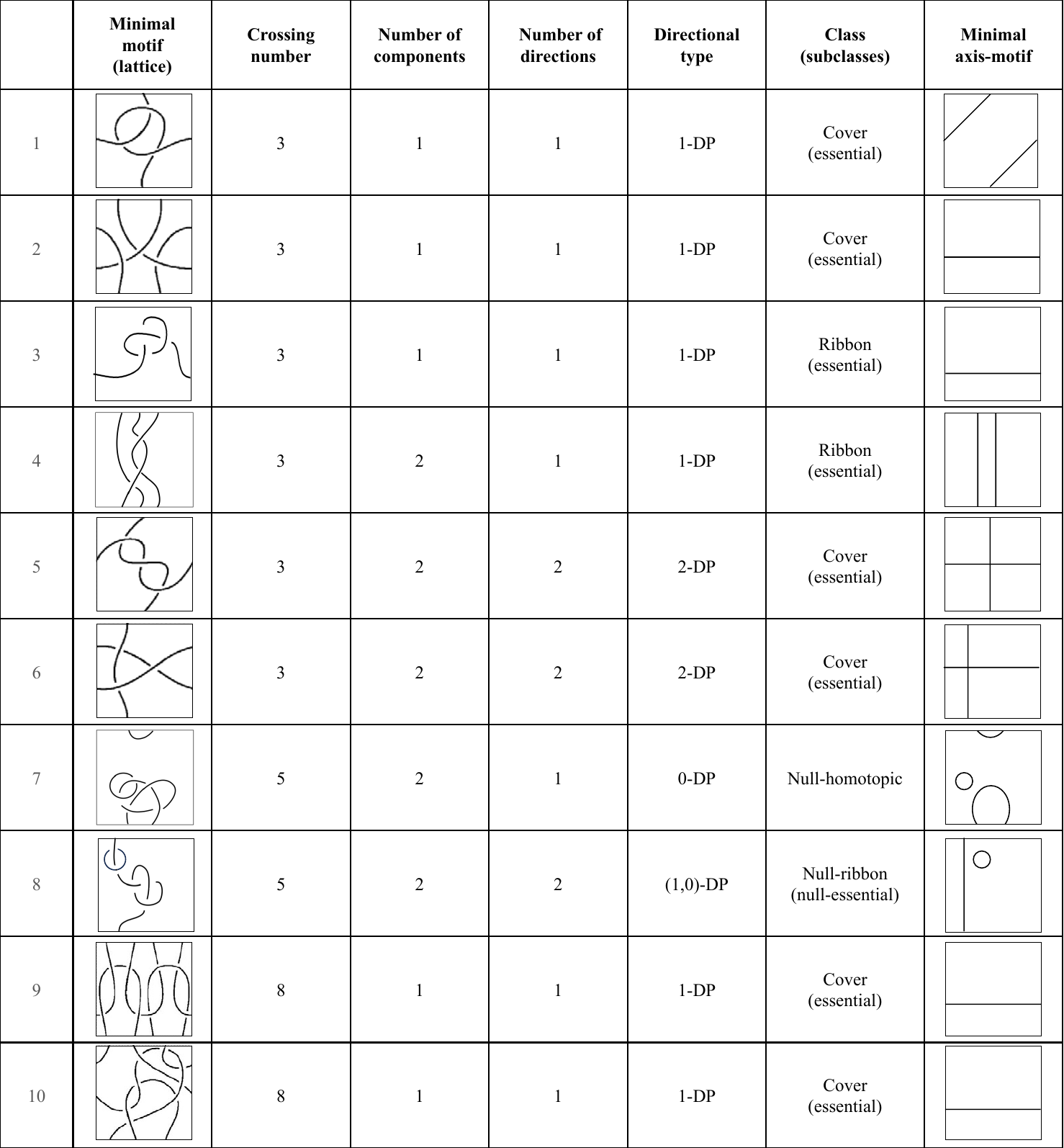}
      \caption{\label{table1} Minimal motifs and some  invariants of their topological properties.}
\end{table}
\unskip

\begin{table}[H]
\centering
   \includegraphics[width=5.3in]{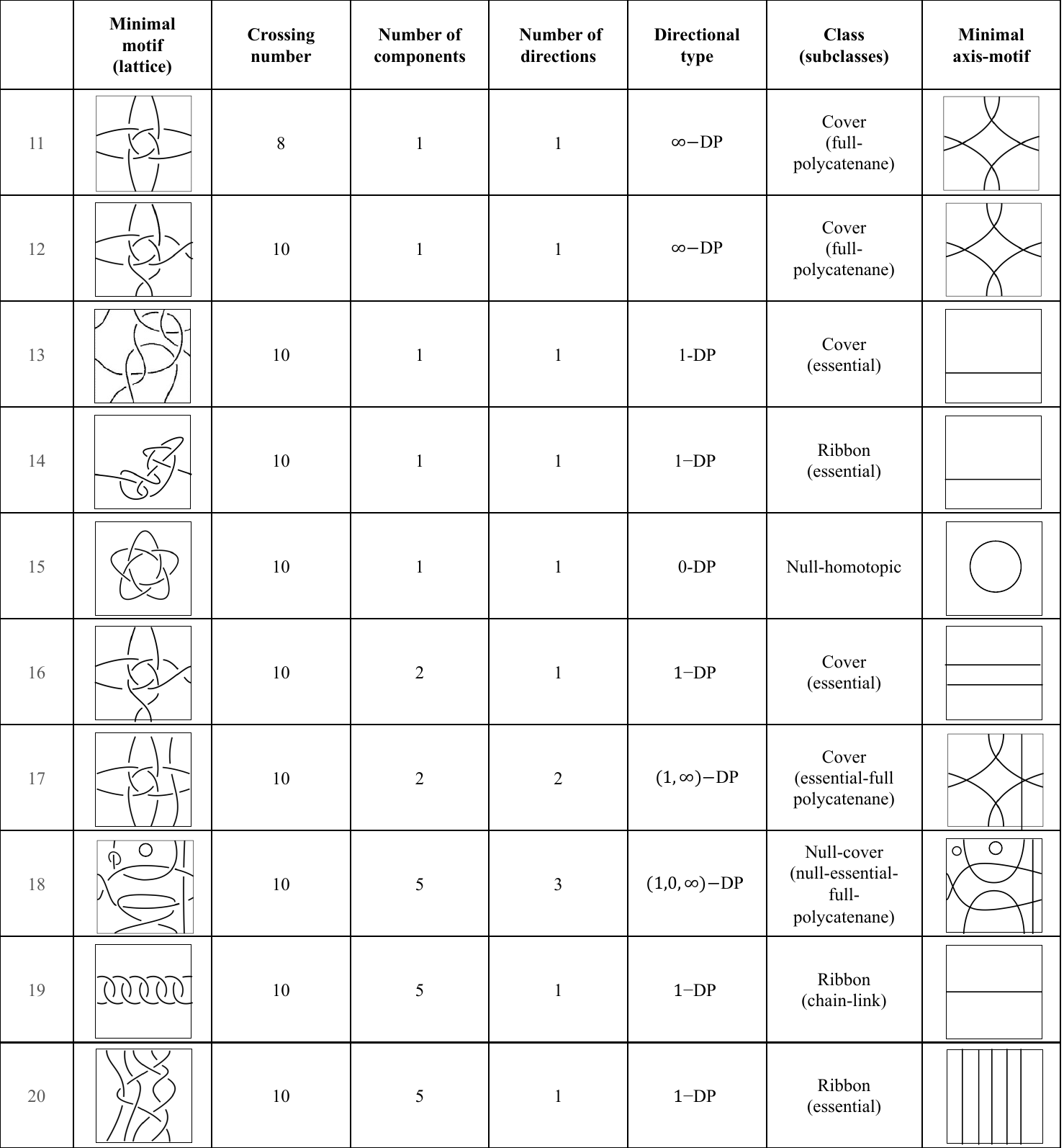}
      \caption{\label{table2}  Minimal motifs and some  invariants of their topological properties.}
\end{table}
\unskip


\noindent {\bf Concluding remarks.} The invariants of DP tangles constructed in this work are measures of their topological complexity. All of them refer to global topological properties of a DP tangle, and they add to the list of the existing invariants. In a forthcoming paper \cite{DLM2} we shall present a more robust invariant that encompasses the directional type of a DP tangle, as defined in Definition~\ref{def:DP type}. This enhanced invariant gives rise to a canonical embedding, and also aids toward detecting a minimal motif for a DP tangle.


\end{document}